\renewcommand{\d}{\mathrm{d}}
\newcommand{\E}{\mathbb{E}}
\renewcommand{\P}{\mathbb{P}}
\newcommand{\Q}{\mathbb{Q}}
\renewcommand{\S}{\mathbb{S}}
\newcommand{\B}{\mathbb{B}}
\newcommand{\G}{\mathbb{G}}
\newcommand{\W}{\mathbb{W}}
\newcommand{\R}{\mathbb{R}}
\newcommand{\cvar}{\operatorname{CVaR}}
\newcommand{\vct}[1]{\boldsymbol{#1}}
\newtheorem{thm}{Theorem}
\newtheorem{problem}{Problem}
\newtheorem{defn}{Definition}
\newtheorem{rem}{Remark}
\newtheorem{cor}{Corollary}
\title{\LARGE \bf
Distributionally Robust Density Control with\\ Wasserstein Ambiguity Sets
}
\author{Joshua Pilipovsky and Panagiotis Tsiotras
\thanks{J. Pilipovsky is a PhD student at the School of Aerospace Engineering, Georgia Institute of Technology, Atlanta, GA30332-0150, USA. Email: jpilipovsky3@gatech.edu}%
\thanks{P. Tsiotras is the David \& Lewis Chair and Professor at the School of Aerospace Engineering and the Institute for Robotics \& Intelligent Machines, Georgia Institute of Technology, Atlanta, GA30332-0150, USA. Email: tsiotras@gatech.edu}%
}
\begin{document}

\maketitle
\thispagestyle{empty}
\pagestyle{empty}

\begin{abstract}

Precise control under uncertainty requires a good understanding and characterization of the noise affecting the system.
This paper studies the problem of steering state distributions of dynamical systems subject to partially known uncertainties.
We model the distributional uncertainty of the noise process in terms of Wasserstein ambiguity sets, which, based on recent results, have been shown to be an effective means of capturing and propagating uncertainty through stochastic LTI systems.
To this end, we propagate the distributional uncertainty of the state through the dynamical system, and, using an affine feedback control law, we steer the ambiguity set of the state to a prescribed, terminal ambiguity set.
We also enforce distributionally robust CVaR constraints for the transient motion of the state so as to reside within a prescribed constraint space.
The resulting optimization problem is formulated as a semi-definite program, which can be solved efficiently using standard off-the-shelf solvers.
We illustrate the proposed distributionally-robust framework on a quadrotor landing problem subject to wind turbulence.

\end{abstract}

\section{INTRODUCTION}
When controlling a dynamical system affected by noise, one needs to be able to discern the statistical properties of the exogenous disturbances acting on the system.
When such a characterization is unknown, or is only approximately known, care must be taken to ensure robust performance of the system under a range of uncertainties that can potentially affect the system.
Indeed, if, for example, a control designer naively assumes a normally distributed noise process, the resulting control law may severely underestimate the probability of violating the constraints or it may fail to reach a given desired terminal state \cite{JP_DR}.

To this end, we would like to systematically and tractably solve a stochastic optimal control problem that can not only control the dispersion of system states to a prescribed terminal distribution, but also steer the uncertainty of this dispersion for all disturbances sufficiently close to the true disturbance acting on the system.
The theory of covariance control, originally introduced in the 80's with works of Hotz and Skelton \cite{HS2} solved the problem of steering the first two moments of the state distribution in the infinite horizon setting.
In recent years, this theory has been extended to the finite-horizon \cite{EB1, Max1} setting, as well as extensions involving chance constraints \cite{EB2, Max2, exact_CS_2}, partially observed systems \cite{OFCS, Pili8}, and data-driven scenarios \cite{JP_DDCS} under the term covariance steering (CS) to emphasize the finite-horizon problem formulation.
This framework has been successfully applied to a plethora of problems of interest, including spacecraft rendezvous \cite{Pili4}, powered-descent guidance \cite{PDG_Jack}, interplanetary trajectory optimization \cite{JoshJack, spacecraft_Oguri, spacecraft_Benedikter}, aggressive driving \cite{CSMPC_driving}, and other pertinent applications.
The baseline theory is mathematically tractable and elegant, however it assumes Gaussian noise entering the system, as well as boundary Gaussian distributions for the initial and terminal states.
Recent extensions have relaxed these limiting assumptions and have solved CS problems with more general noise models, such as Gaussian random fields \cite{CS_GRF}, martingale processes \cite{CS_martingale}, and multiplicative noise \cite{CS_multiplicative}.

All of these extensions, albeit successful, have assumed exact knowledge of the noise model affecting the system, which is unrealistic in practice.
We are rarely fully aware of the disturbances acting on the system, and at best we can characterize partial statistical information from collected data, e.g., the first two moments.
As such, it is fruitful to consider the problem of steering the distribution of the state under distributional uncertainty in the noise model.
A natural framework to accomplish this goal is to the model the noise as residing in an \textit{ambiguity set}, which is characterized by a whole family of distributions that the noise can follow.
The goal, then, is to optimize the control law and satisfy constraints under the \textit{worst-case} disturbance that nature imposes within the allowable ambiguity set.
The work in \cite{JP_DR} has solved this problem by characterizing the distributional noise uncertainty as a Chebyshev ambiguity set, which is a family of distributions that have common first two moments, and by tractably enforcing chance constraints using concentration inequalities.
This ambiguity set, however, is still quite limited in its expressivity due to the assumption of common moments.

Recently, there has been a great promise in capturing distributional uncertainty via Wasserstein ambiguity sets, which are defined through the natural Wasserstein metric on probability spaces.
Indeed, \cite{DRO_Kuhn_risk, DRO_Kuhn_empirical} has shown that distributionally robust optimization (DRO) over Wasserstein ambiguity sets is tractable in cases where the nominal distribution is either empirical or elliptical. 
In the context of stochastic control, the works in \cite{aolaritei2023distributional, capture_propagate_control_OT} have outlined a general framework for capturing distributional noise uncertainty through empirical data collected, and have provided a procedure to propagate Wasserstein ambiguity sets through stochastic LTI systems, which is analytically exact under some mild assumptions.
Subsequent works have applied this framework to design optimal open-loop controllers while satisfying conditional value-at-risk (CVaR) constraints, as well as in the context of model-predictive control \cite{DR_DeePC}.

Our contributions are as follows.
To the best of our knowledge, this is the first work that solves the open problem \cite{aolaritei2023distributional} of optimizing over both open-loop and \textit{feedback} controllers for distributionally-robust optimal control problems, which is accomplished using an affine state feedback control law using established techniques from the CS literature.
Secondly, we show that it is possible to \textit{steer} the distributional state uncertainty to a desired terminal ambiguity set, thereby controlling the dispersion of system states under all possible noise realizations within the Wasserstein ambiguity set.
Lastly, we apply the proposed framework to the problem of a quadrotor landing subject to wind turbulence.

\section{NOTATION}

We assume a common probability space $(\Omega,\mathcal{F},\P)$ for all random objects.
Real valued-vectors are denoted by lowercase letters, $u\in\R^{m}$, matrices are denoted by upper-case letters, $V\in\R^{n\times M}$, and random vectors are denoted by boldface, $\vct x\in\R^{n}$.
The space of probability distributions over $\R^{d}$ with finite $q$th moment is denoted by $\mathcal{P}_{q}(\R^{d})$.
Given $\P,\Q\in\mathcal{P}_{q}(\R^{d})$, we denote by $\P\otimes\Q$ their product distribution and by $\P^{\otimes N}$ the $N$-fold product distribution $\P\otimes\cdots\otimes\P$ with $N$ terms.
Given a matrix $A\in\R^{m\times d}$, the pushforward of $\P$ is given by $A_{\#}\P$ and is defined by $(A_{\#}\P(\mathcal{B})) = \P(A^{-1}(\mathcal{B}))$, for all Borel sets $\mathcal{B}\subset\R^{m}$.
We denote by $\delta_{x}$ the Dirac delta distribution concentrating unit mass at the atom $x\in\R^{n}$.
The convolution of $\P$ and $\delta_{x}$ is denoted by $\delta_{x} * \P$, and is defined by $(\delta_{x} * \P)(\mathcal{B}) = \P(\mathcal{B} - x)$.
With slight abuse of notation, the operator $\|\cdot\|$ denotes the Euclidean norm for vectors and the spectral norm for matrices.
The Moore-Penrose pseudoinverse of a matrix $A$ is denoted by $A^\dagger$.
Lastly, for any $t\in\mathbb{Z}_{+}$, we set $[t] = \{0,\ldots,t\}$.

\section{PROBLEM STATEMENT}
Consider the discrete-time, stochastic linear dynamics system
\begin{equation}~\label{eq:dynamics}
    \vct x_{k+1} = A \vct x_{k} + B \vct u_k + D \vct w_{k}, \quad \forall k\in[N-1],
\end{equation}
with states $\vct x_k\in\R^{n}$, control inputs $\vct u_k\in\R^{m}$, and process noise sequence $\{\vct w_k\}_{k\in[N-1]}\subset\R^{d}$ that is neither identically nor necessarily independently distributed.
In this work, we assume the system model $\{A, B, D\}$ is known.
The noise process $\{\vct w_k\}_{k\in[N-1]}$, on the other hand, is unknown but belongs to an ambiguity set $\mathcal{W}$, which is defined rigorously in the following two definitions.
\begin{defn}[\cite{DRO_Kuhn_risk}]
    \label{defn:structural_ambiguity_set}
    A structural ambiguity set $\mathcal{S}$ is a subset of $\mathcal{P}_{2}(\R^{d})$ that is closed under positive semidefinite affine pushforwards, that is, for any $\Q\in\mathcal{S}$ and any affine transformation $f:\R^{d}\rightarrow\R^{d}$ of the form $f(\xi) = A\xi + b$, for some $A\succeq 0$ and $b\in\R^{d}$, we have $\Q \circ f^{-1} \in \mathcal{S}$.
\end{defn}

Some examples of structural ambiguity sets include the set of symmetric distributions, unimodal distributions, log-concave distributions, and elliptical distributions.
Of course, the entire probability space $\mathcal{P}_{2}(\R^{d})$ is trivially a structural ambiguity set.
\begin{defn}
    \label{defn:Wasserstein_ambiguity_set}
    The Wasserstein ambiguity set of radius $\varepsilon$ with transportation cost $c$ centered at the nominal distribution $\P$ is defined by
    \begin{equation}~\label{eq:ambiguity_set}
        \B_{\varepsilon, p}^{c}(\P) = \{\Q\in\mathcal{S} : \W_{p}^{c}(\Q,\P) \leq \varepsilon\},
    \end{equation}
    with respect to the type-$p$ Wasserstein metric
    \begin{equation}~\label{eq:wasserstein_distance}
        \W_{p}^{c}(\P,\P') \triangleq \left(\inf_{\pi\in\Pi(\P,\P')}\int_{\R^{d}\times\R^{d}}c(\xi, \xi')^{p} \ \pi(\d \xi, \d \xi')\right)^\frac{1}{p},
    \end{equation}
    where $\Pi(\P,\P')$ denotes the set of all joint probability distributions of $\xi\in\R^{d}$ and $\xi'\in\R^{d}$ with marginals $\P$ and $\P'$, respectively.
\end{defn}

In what follows, we will assume no structural information on the noise (i.e., $\mathcal{S} = \mathcal{P}_{2}(\R^{d})$), and we will work with the type-2 Wasserstein distance ($p = 2$) and the Euclidean norm transportation cost, i.e., $c = \|\cdot\|$.
For simplicity, we denote $\B_{\varepsilon}^{\|\cdot\|}\triangleq \B_{\varepsilon,2}^{\|\cdot\|}$ and $\W \triangleq \W_{2}^{\|\cdot\|}$.
A customary way to construct the noise ambiguity set is by defining the nominal distribution as $\hat\P_{w} = \frac{1}{T}\sum_{i=1}^{T}\delta_{\hat{w}^{(i)}}$, where $\hat{w}^{(i)} \triangleq [(\hat w_0^{(i)})^\intercal,\ldots, (\hat w_{N-1}^{(i)})^\intercal]^\intercal\in\R^{Nd}$ is a noise realization sampled from the underlying \textit{true} distribution $\P_{w}$.
It can be shown that by choosing a suitable radius $\varepsilon(T, \beta)$, the true distribution lies in the ball $\B_{\varepsilon}^{\|\cdot\|}(\hat\P_{w})$ with probability $1-\beta$ \cite{Wasserstein_convergence}.
In the present work, however, we assume that the noise sequence ambiguity set $\mathcal{W}$ is centered on a zero-mean normal distribution $\hat\P_{w} = \mathcal{N}(0,\Sigma_{w})$ with noise covariance matrix $\Sigma_{w} \in\R^{Nn} \succ 0$ and radius $\varepsilon > 0$.
\begin{rem}
    \label{rem:noise_ambiguity_sets}
    It is possible, and in fact customary, to define the noise ambiguity set for an individual disturbance $w_k$ via $\B_{\varepsilon}^{\|\cdot\|}(\hat\P_{w})$.
    Assuming the noise is i.i.d., then it can be shown that the ambiguity set for the entire noise \textit{sequence} is $\B_{N\varepsilon}^{\|\cdot\|}(\hat\P_{w}^{\otimes N})$.
    However, in this work we choose to define the ambiguity set directly in terms of the disturbance sequence to include cases where the noise terms are not independent of one another, thus prohibiting us from writing the joint disrtibution of $\vct w$ as an $N$-fold product distribution.
\end{rem}

We assume that the initial state $x_0 = x_i$ is deterministic, which implies that all the distributional uncertainty in the state results from the noise ambiguity set $\mathcal{W} = \B_{\varepsilon}^{\|\cdot\|}(\hat\P)$.
We define the set $\pi$ of \textit{admissible} control inputs as the set of control sequences $\{\vct u_k\}_{k\in[N]}$ where the input $\vct u_k$ is an affine function of the state.
Further, we define the \textit{nominal} state as the deterministic part of the state governed by the nominal dynamics
\begin{equation}
    \label{eq:nominal_dynamics}
    \bar{x}_{k+1} = A x_{k} + B \bar{u}_k,
\end{equation}
where $\bar{u}_{k}\in\R^{m}$ is the nominal control, and we define the \textit{error} state $\tilde{\vct x}_{k} \triangleq \vct x_{k} - \bar{x}_k$, which obeys the dynamics
\begin{equation}
    \label{eq:error_dynamics}
    \tilde{\vct x}_{k+1} = A \tilde{\vct x}_k + B \tilde{\vct u}_k + D\vct w_k,
\end{equation}
where $\tilde{\vct u}_k$ is the error control.
\begin{rem}
    \label{rem:CS_comparison}
    The nominal state as defined in this work can no longer be associated with the \textit{mean} state $\E_{\P_k}[\vct x_k]$, as is customarily done in the CS literature \cite{exact_CS_2}.
    In fact, the expectation of the state cannot even be computed because the underlying state distribution is ambiguous by definition.
\end{rem}

The goal is to steer to a terminal ambiguity set $\S_{f} \triangleq \B_{\delta}^{\|\cdot\|}(\P_{f})$, where $\delta > 0$ is a given, desired terminal radius, and $\P_{f} = \mathcal{N}(\mu_{f},\Sigma_{f})$ is the desired terminal center distribution, while minimizing the distributionally-robust objective function
\begin{equation}~\label{eq:DR_cost}
    \mathcal{J} = \beta\sum_{k=0}^{N-1} \|\bar{u}_k\| + \max_{\P\in\mathcal{W}} \E_{\P}\left[\sum_{k=0}^{N-1} \tilde{\vct x}_k^\intercal Q_k \tilde{\vct x}_k + \tilde{\vct u}_k^\intercal R_k \tilde{\vct u}_k\right],
\end{equation}
where $Q_k\succeq 0$ and $R_k\succ 0$ represent the state and input cost weights, respectively, and $\beta > 0$ denotes the weight of the nominal control.
Lastly, we would also like to enforce distributionally-robust constraints on the trajectory of the state along the planning horizon.

Letting the state constraint space be the polyehdron $\mathcal{X} \triangleq \{x : \max_{j\in[J]} \alpha_j^\intercal x + \beta_j \leq 0\}$, the traditional way of enforcing probabilistic constraints is to enforce chance constraints, which limit the probability of violating the constraints to be smaller than some prescribed risk $\gamma$ \cite{Ono_chance_constraints}.
It is well-known, however, that Value-at-Risk (VaR) constraints are \textit{not} convex, and are only exactly tractable when the underlying state distribution is normal; otherwise, they are approximated using concentration inequalities \cite{tractable_CC}.
In this work, we choose the CVaR risk measure, which is defined as follows.
\begin{defn}
    \label{defn:CVaR}
    Given $f:\R^{n}\rightarrow\R$ and a random variable $\vct x\sim\P$ on $\R^{n}$, the CVaR of $f(\vct x)$ at the quantile $1-\gamma$ is
    \begin{equation}~\label{eq:CVAR}
        \cvar_{1-\gamma}^{\P}(f(x)) = \inf_{\tau\in\R} \ \left(\tau + \frac{1}{\gamma}\E_{\P}[\max\{0,f(x)-\tau\}]\right).
    \end{equation}
\end{defn}
The CVaR of a random variable is by definition convex \cite{tractable_CC}, implicitly satisfies the VaR constraint, and mitigates the effects of extreme ``black swan" events by reducing the tail probability of violating the constraints.
To this end, we enforce the distributionally-robust CVaR (DR-CVaR) constraints
\begin{equation}~\label{eq:DR_CVaR}
    \sup_{\P_k\in\S_{k}} \cvar_{1-\gamma}^{\P_k}\left(\max_{j\in[J]}\alpha_j^\intercal x_k + \beta_j\right) \leq 0, \quad \forall k \in [N],
\end{equation}
where $\S_k$ denotes the ambiguity set of the state at time step $k$.
In summary, the distributionally-robust density steering (DR-DS) problem is defined as follows.
\begin{problem}
    \label{prob:DDDS}
    For a given initial state $x_0$, find an admissible control sequence $\{\vct u_k\}_{k\in[N]}\in\pi$ that minimizes the DR cost functional \eqref{eq:DR_cost} subject to the dynamics \eqref{eq:dynamics}, noise ambiguity set $\mathcal{W}$ and DR-CVaR constraints \eqref{eq:DR_CVaR}, such that the terminal distributional uncertainty in the state satisfies $\S_{N} \subseteq \S_{f}$.
\end{problem}

\section{PROBLEM REFORMULATION}
We begin by first reformulating the dynamics \eqref{eq:dynamics} into a more amenable form for analysis.
To this end, define the augmented state, control, and disturbance vectors $\vct x \triangleq [\vct x_0^\intercal,\ldots, \vct x_{N}^\intercal]^\intercal\in\R^{(N+1)n}, \vct u \triangleq [\vct u_0^\intercal,\ldots,\vct u_{N-1}^\intercal]^\intercal\in\R^{Nm}, \vct w \triangleq [\vct w_0^\intercal,\ldots, \vct w_{N-1}^\intercal]^\intercal\in\R^{Nd}$, respectively, which obey the augmented linear system
\begin{equation}
    \label{eq:augmented_system}
    \vct x = \mathcal{A}x_0 + \mathcal{B} \vct u + \mathcal{D} \vct w,
\end{equation}
for appropriate matrices $\mathcal{A},\mathcal{B},\mathcal{D}$ \cite{Max1}.
We consider the affine state feedback control law $\vct u_k = K_k \tilde{\vct x}_k + v_k$, where $v_k\in\R^{m}$ is the feed-forward control and $K_k\in\R^{m\times n}$ is the feedback gain.
Defining the augmented feed-forward control $v \triangleq [v_0^\intercal,\ldots,v_{N-1}^\intercal]^\intercal\in\R^{Nm}$ and augmented feedback gain matrix $K\in\R^{Nm\times (N+1)n}$, and using the state decomposition in \eqref{eq:nominal_dynamics}-\eqref{eq:error_dynamics}, the dynamics \eqref{eq:augmented_system} become
\begin{equation}~\label{eq:augmented_system_control}
    \begin{aligned}
        \bar{x} &= \mathcal{A} x_0 + \mathcal{B} v, \\
        \tilde{\vct x} &= (I - \mathcal{B}K)^{-1}\mathcal{D} \vct w.
    \end{aligned}
\end{equation}
Since $K$ is block lower-triangular and $\mathcal{B}$ is strictly block lower-triangular, it follows that that the matrix $I - \mathcal{B} K$ is invertible.
Furthermore, following \cite{JoshJack}, we define the new decision variable $L\triangleq K(I - \mathcal{B}K)^{-1}$, from which it can be shown that $I + \mathcal{B}L = (I-\mathcal{B}K)^{-1}$, and the original gains can be recovered from $K = L(I + \mathcal{B}L)^{-1}$ following the same logic.
As a result, the error state dynamics become
\begin{equation}
    \label{eq:error_dynamics_change_of_vars}
    \tilde{\vct x} = (I + \mathcal{B}L)\mathcal{D}\vct w.
\end{equation}
Given $\vct w\in\mathcal{W}$, it follows that the distributional uncertainty in the error state results from the linear transformation $\tilde{\S}_{k} = (\tilde{L}_{k})_{\#}\B_{\varepsilon}^{\|\cdot\|}(\hat\P)$, with $\tilde{L}_k \triangleq E_k(I + \mathcal{B}L)\mathcal{D}$, where $E_k\in\R^{n\times (N+1)n}$ is a matrix that isolates the $k$th state element from $\vct x$.
To this end, we now state a result on the propagation of ambiguity sets via linear transformations \cite{aolaritei2023distributional}.
\begin{thm}~\label{thm:linear_pushforward}
    Let $\P\in\mathcal{P}(\R^{d})$, and consider the linear transformation defined by the matrix $A\in\R^{m\times d}$.
    Moreover, let $c:\R^{d}\rightarrow\R_{\geq 0}$ be orthomonotone\footnote{That is, $c(x_1 + x_2) \geq c(x_1)$ for all $x_1,x_2\in\R^{d}$ satisfying $x_1^\intercal x_2 = 0$.}.
    Then,
    \begin{equation}
        A_{\#}\B_{\varepsilon}^{c}(\P) \subseteq \B_{\varepsilon}^{c\circ A^\dagger}(A_{\#}\P).
    \end{equation}
    Moreover, if the matrix $A$ is full row-rank, then
    \begin{equation}
        A_{\#}\B_{\varepsilon}^{c}(\P) = \B_{\varepsilon}^{c\circ A^\dagger}(A_{\#}\P),
    \end{equation}
    with $A^\dagger = A^\intercal (AA^\intercal)^{-1}$.
\end{thm}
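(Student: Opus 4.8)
The plan is to establish the two inclusions separately; in both cases the engine is transporting a (near-)optimal coupling through $A$, combined with the elementary linear-algebra fact that $A^\dagger A$ is the orthogonal projector onto the row space $\mathrm{range}(A^\intercal) = (\ker A)^\perp$, so in particular it is symmetric and idempotent.

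For the forward inclusion $A_{\#}\B_{\varepsilon}^{c}(\P) \subseteq \B_{\varepsilon}^{c\circ A^\dagger}(A_{\#}\P)$, I would fix any $\Q\in\B_{\varepsilon}^{c}(\P)$ and pick an optimal coupling $\pi\in\Pi(\Q,\P)$ realizing $\W_{p}^{c}(\Q,\P)\le\varepsilon$. Pushing $\pi$ forward through the map $(\xi,\xi')\mapsto(A\xi,A\xi')$ produces a coupling of $A_{\#}\Q$ and $A_{\#}\P$, so by the change-of-variables formula
\begin{equation}
\W_{p}^{c\circ A^\dagger}(A_{\#}\Q, A_{\#}\P)^{p} \leq \int c\big(A^\dagger(A\xi - A\xi')\big)^{p}\,\pi(\d\xi,\d\xi') = \int c\big(A^\dagger A(\xi - \xi')\big)^{p}\,\pi(\d\xi,\d\xi').
\end{equation}
Writing $\xi-\xi' = A^\dagger A(\xi-\xi') + (I-A^\dagger A)(\xi-\xi')$ decomposes the displacement into orthogonal components (orthogonality follows from $A^\dagger A$ being a symmetric idempotent), so orthomonotonicity of $c$ gives the pointwise bound $c(A^\dagger A(\xi-\xi'))\le c(\xi-\xi')$. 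Substituting this bound shows the right-hand side is at most $\int c(\xi-\xi')^{p}\,\pi = \W_{p}^{c}(\Q,\P)^{p}\le\varepsilon^{p}$, which is exactly the claimed inclusion.

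For the reverse inclusion under full row rank, the extra ingredient is that $AA^\dagger = I$, so $A^\dagger$ is a genuine right inverse and the affine map $x'\mapsto x' + A^\dagger(y-y')$ sends the fiber $A^{-1}(y')$ onto the fiber $A^{-1}(y)$. I would take $\nu\in\B_{\varepsilon}^{c\circ A^\dagger}(A_{\#}\P)$, choose an optimal coupling $\tilde\pi$ of $\nu$ and $\mu\triangleq A_{\#}\P$, and disintegrate $\P$ along the fibers of $A$ as $\P = \int \P^{y'}\,\mu(\d y')$, with each $\P^{y'}$ supported on $A^{-1}(y')$. Gluing these pieces, I construct $\Q$ as the law of $x\triangleq x' + A^\dagger(y-y')$ where $(y,y')\sim\tilde\pi$ and $x'\sim\P^{y'}$. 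A direct marginal check shows $x'\sim\P$ and $Ax = y' + (y-y') = y$, so that $A_{\#}\Q = \nu$ and the induced plan couples $\Q$ with $\P$. Since $x-x' = A^\dagger(y-y')$ holds deterministically, that plan has cost $\int c(A^\dagger(y-y'))^{p}\,\tilde\pi = \W_{p}^{c\circ A^\dagger}(\nu,\mu)^{p}\le\varepsilon^{p}$, whence $\Q\in\B_{\varepsilon}^{c}(\P)$ and $\nu = A_{\#}\Q$, giving the reverse containment and thus equality.

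The forward inclusion is essentially a one-line consequence of orthomonotonicity once the projection identity is in hand, so I expect the main obstacle to be the reverse direction, namely making the gluing construction rigorous: the measurable disintegration of $\P$ along the fibers of $A$ (valid since $\R^{d}$ is Polish), the verification that the glued plan is a bona fide probability measure with the correct two marginals, and the existence of the optimal $\tilde\pi$ (which can be sidestepped by working with $\eta$-optimal plans and letting $\eta\to 0$ if one wishes to avoid invoking lower semicontinuity of $c$). The pseudoinverse facts that $A^\dagger A$ is the orthogonal projection onto $\mathrm{range}(A^\intercal)$ and, under full row rank, that $AA^\dagger = I$ with $A^\dagger = A^\intercal(AA^\intercal)^{-1}$, are standard and can simply be cited.
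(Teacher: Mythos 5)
Your proof is correct, but note that the paper itself gives no proof of this statement: Theorem~\ref{thm:linear_pushforward} is imported verbatim from \cite{aolaritei2023distributional}, and none of the appendices address it, so there is no in-paper argument to compare against. Your two-step route is the standard one (and is essentially the argument in the cited source): for the inclusion, push an optimal plan through $(\xi,\xi')\mapsto(A\xi,A\xi')$ and use that $A^\dagger A$ is the symmetric idempotent projector onto $(\ker A)^\perp$, so orthomonotonicity applied to the orthogonal splitting $\xi-\xi'=A^\dagger A(\xi-\xi')+(I-A^\dagger A)(\xi-\xi')$ gives $c(A^\dagger A(\xi-\xi'))\le c(\xi-\xi')$; for the converse under full row rank, $AA^\dagger=I$ lets the gluing $x=x'+A^\dagger(y-y')$ with $x'\sim\P^{y'}$ produce a preimage measure $\Q$ with $A_\#\Q=\nu$ and transport cost exactly $\int c(A^\dagger(y-y'))^p\,\d\tilde\pi\le\varepsilon^p$. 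Both marginal checks you sketch go through, and the measurability issues (disintegration over the fibers of $A$, existence of optimal or $\eta$-optimal plans) are routine on $\R^d$. Two cosmetic caveats: the theorem's one-argument orthomonotone $c$ is implicitly identified with the two-argument transportation cost via $c(\xi,\xi')=c(\xi-\xi')$, which you should state; and since the paper's ambiguity set \eqref{eq:ambiguity_set} is intersected with a structural set $\mathcal{S}$, the reverse inclusion additionally requires the constructed $\Q$ to lie in $\mathcal{S}$ --- automatic here because the paper takes $\mathcal{S}=\mathcal{P}_2(\R^d)$, but worth a remark since the equality can fail for a proper structural class.
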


Since $\tilde{L}_k\in\R^{n\times Nd}$, where $Nd \gg n$, in most cases of interest, it is safe to assume that $\tilde{L}_k$ is full row-rank without loss of generality.
Noting that the nominal state is simply a delta distribution in the probability space, the distributional uncertainty in the state at time step $k$ becomes
\begin{equation}
    \label{eq:dist_uncertainty_state}
    \S_k = \delta_{\mathcal{A}x_0 + \mathcal{B}v}*\B_{\varepsilon}^{\|\cdot\|\circ \tilde{L}_k^\dagger}\big((\tilde{L}_k)_{\#}\hat\P_{w}\big),
\end{equation}
defined on the support $\R^{n}$.
\begin{rem}
    The interpretation of \eqref{eq:dist_uncertainty_state} is that the feedback gain $L$ affects both the \textit{shape} of the center distribution as well as the \textit{size} of the ambiguity set, while the open-loop term $v$ controls the \textit{position} of the center distribution in $\mathcal{P}(\R^{n})$.
    This is a direct generalization of the traditional CS literature, where the open-loop controls the mean state, while the feedback controls the covariance of the state.
\end{rem}

In the next section, we tractably formulate the DR-CVaR constraints \eqref{eq:DR_CVaR} using the exact ambiguity set \eqref{eq:dist_uncertainty_state} and techniques from DRO.
\subsection{DR-CVaR Constraints}
To make the CVaR constraints \eqref{eq:DR_CVaR} tractable, we should use our knowledge of the ambiguity set $\S_{k}$, defined in terms of its Gaussian reference distribution $\hat\P_{k} = \mathcal{N}(0, \tilde{L}_k \Sigma_{w} \tilde{L}_k^\intercal)$, and transportation cost according to \eqref{eq:dist_uncertainty_state}.
The work in \cite{DRO_Kuhn_risk} tractably computes the DR-CVaR of piece-wise linear functions, while the work in \cite{DRO_Kuhn_ML} tractably computes the DR cost of expectations of general piece-wise quadratic functions.
In both cases, however, it can be shown that the resulting convex programs are nonlinear in the feedback gain $L$, which makes them intractable from a computational standpoint.
We thus leave it as an open problem to tractably formulate \textit{joint} DR-CVaR constraints for a polyhedral constraint space with a nominal Gaussian distribution whose covariance is parameterized by the feedback gain decision variables.

Instead, we consider an alternative where we wish to enforce the DR-CVaR constraints for each \textit{side} of the polytope along the planning horizon, that is,
\begin{equation}
    \label{eq:DR_CVAR_individual}
    \sup_{\P_k\in\S_k} \cvar_{1-\gamma_{jk}}^{\P_k}(\alpha_{j}^\intercal x_k + \beta_j) \leq 0, \ \forall j\in[J], \ \forall k\in[N].
\end{equation}
In essence, at each time step, we split up the joint risk $\gamma$ to individual risks $\gamma_{jk}$ of violating the DR-CVaR constraints along each half space and for each time step.
This now becomes the DR-CVaR of a \textit{linear} function, which we will show is SDP representable and linear in the decision variables $(v, L)$.
First, however, we need to define the notion of a \textit{Gelbrich} ambiguity set.
\begin{defn}
    \label{defn:Gelbrich_ambiguity_set}
    The Gelbrich ambiguity set of radius $\varepsilon$ centered at a mean-covariance pair $(\mu,\Sigma)$ is given by
    \begin{equation}
        \label{eq:gelbrich_set}
        \mathcal{G}_{\varepsilon}(\mu,\Sigma) = \{\Q\in\mathcal{P}(\R^{d}) : (\E_{\Q}[\xi], \mathrm{Cov}_{\Q}[\xi])\in\mathcal{U}_{\varepsilon}(\mu,\Sigma)\},
    \end{equation}
    where $\mathcal{U}_{\varepsilon}(\mu,\Sigma)$ is an uncertainty set in the space of mean vectors and covariance matrices, defined as
    \begin{equation}
        \mathcal{U}_{\varepsilon}(\hat\mu,\hat\Sigma) = \{(\mu,\Sigma)\in\R^{d}\times \mathbb{S}_{+}^{d} : \G((\mu, \Sigma), (\hat\mu,\hat\Sigma)) \leq \varepsilon\},
    \end{equation}
    where
    \begin{align}
        \G((\mu_1,\Sigma_1),(\mu_2,\Sigma_2)) &\triangleq  \|\hat\mu - \mu\|^{2} + \nonumber \\
        &\mathrm{tr}\left[\hat\Sigma + \Sigma - 2\left(\hat\Sigma^{\frac{1}{2}}\Sigma\hat\Sigma^{\frac{1}{2}}\right)^{\frac{1}{2}}\right],
    \end{align}
    is the Gelbrich distance between two mean-covariance pairs.
\end{defn}
\begin{thm}[\cite{DRO_Kuhn_risk}]~\label{thm:Gelbrich_inclusion}
    \label{thm:gelb_was_equivalence}
    If the nominal distribution $\hat\P$ has mean $\hat\mu\in\R^{d}$ and covariance matrix $\hat\Sigma\succeq 0$, then we have $\B_{\varepsilon}^{\|\cdot\|}(\hat\P) \subseteq \mathcal{G}_{\varepsilon}(\hat\mu,\hat\Sigma)$.
    In addition, if $\mathcal{S}$ is the structural ambiguity set generated by $\hat\P$ and if $\hat\Sigma \succ 0$, then the inclusion becomes an equality.
\end{thm}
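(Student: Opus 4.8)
The plan is to prove the two claims separately: the general inclusion $\B_{\varepsilon}^{\|\cdot\|}(\hat\P)\subseteq\mathcal{G}_{\varepsilon}(\hat\mu,\hat\Sigma)$ rests on a moment-based lower bound for the type-2 Wasserstein distance (Gelbrich's inequality), while the upgrade to equality under the extra hypotheses rests on the \emph{tightness} of that bound together with the pushforward-closure of $\mathcal{S}$ (Definition~\ref{defn:structural_ambiguity_set}). Throughout, for $\Q\in\mathcal{P}(\R^{d})$ I write $(\mu,\Sigma)=(\E_{\Q}[\xi],\mathrm{Cov}_{\Q}[\xi])$.

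For the inclusion, I would fix $\Q\in\B_{\varepsilon}^{\|\cdot\|}(\hat\P)$ and show its moment pair lands in $\mathcal{U}_{\varepsilon}(\hat\mu,\hat\Sigma)$. Taking any coupling $\pi\in\Pi(\Q,\hat\P)$ and centering $\xi=\mu+(\xi-\mu)$, $\xi'=\hat\mu+(\xi'-\hat\mu)$, the cross terms with the centered parts vanish in expectation, leaving
\[
\E_{\pi}\big[\|\xi-\xi'\|^{2}\big]=\|\mu-\hat\mu\|^{2}+\mathrm{tr}[\Sigma+\hat\Sigma]-2\,\mathrm{tr}[C],
\]
where $C=\E_{\pi}[(\xi-\mu)(\xi'-\hat\mu)^{\intercal}]$ is the cross-covariance of $\pi$. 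Every feasible $C$ obeys the joint positive-semidefiniteness $\left[\begin{smallmatrix}\Sigma & C\\ C^{\intercal} & \hat\Sigma\end{smallmatrix}\right]\succeq0$, and the classical matrix semidefinite program gives $\max\big\{\mathrm{tr}[C]:\left[\begin{smallmatrix}\Sigma & C\\ C^{\intercal} & \hat\Sigma\end{smallmatrix}\right]\succeq0\big\}=\mathrm{tr}\big[(\hat\Sigma^{1/2}\Sigma\hat\Sigma^{1/2})^{1/2}\big]$. Since the Wasserstein distance infimizes over couplings and this corresponds to maximizing $\mathrm{tr}[C]$, I obtain
\[
\W(\Q,\hat\P)^{2}=\inf_{\pi}\E_{\pi}\big[\|\xi-\xi'\|^{2}\big]\ \geq\ \|\mu-\hat\mu\|^{2}+\mathrm{tr}\big[\Sigma+\hat\Sigma-2(\hat\Sigma^{1/2}\Sigma\hat\Sigma^{1/2})^{1/2}\big]=\G\big((\mu,\Sigma),(\hat\mu,\hat\Sigma)\big).
\]
Because $\W(\Q,\hat\P)\leq\varepsilon$, this bound confines $(\mu,\Sigma)$ to $\mathcal{U}_{\varepsilon}(\hat\mu,\hat\Sigma)$, i.e.\ $\Q\in\mathcal{G}_{\varepsilon}(\hat\mu,\hat\Sigma)$.

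For the equality, I would show that under $\hat\Sigma\succ0$ the Gelbrich bound is \emph{attained} by a PSD affine map, so that on the family generated by $\hat\P$ the Wasserstein distance depends only on the first two moments. Given a target $(\mu,\Sigma)\in\mathcal{U}_{\varepsilon}$, define the optimal (Monge) transport map
\[
T(\xi)=\mu+A(\xi-\hat\mu),\qquad A=\hat\Sigma^{-1/2}\big(\hat\Sigma^{1/2}\Sigma\hat\Sigma^{1/2}\big)^{1/2}\hat\Sigma^{-1/2}\succeq0,
\]
which is well defined precisely because $\hat\Sigma\succ0$. Then $\Q=T_{\#}\hat\P$ has mean $\mu$ and covariance $\Sigma$, and a direct second-moment computation shows the coupling $(\mathrm{id},T)_{\#}\hat\P$ has transport cost exactly $\G((\mu,\Sigma),(\hat\mu,\hat\Sigma))$; combined with the lower bound this forces $\W(\Q,\hat\P)^{2}=\G\leq\varepsilon^{2}$, so $\Q$ lies in the Wasserstein ball. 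Since $A\succeq0$, the map $T$ is a PSD affine transformation, and as $\mathcal{S}$ is the structural ambiguity set \emph{generated} by $\hat\P$ it is closed under such pushforwards, whence $\Q=T_{\#}\hat\P\in\mathcal{S}$ and thus $\Q\in\B_{\varepsilon}^{\|\cdot\|}(\hat\P)$. Realizing every moment pair of $\mathcal{U}_{\varepsilon}$ in this way, together with the tightness $\W=\sqrt{\G}$ on the generated family, collapses the two descriptions and yields the equality.

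The main obstacle is the tightness/attainment step behind the equality rather than the forward inclusion: the inclusion follows once the trace-maximizing SDP identity $\max\mathrm{tr}[C]=\mathrm{tr}[(\hat\Sigma^{1/2}\Sigma\hat\Sigma^{1/2})^{1/2}]$ is invoked, but showing that the linear coupling \emph{saturates} that bound (so that $\W$ reduces to a function of moments on $\mathcal{S}$) is where both the structural-generation hypothesis and $\hat\Sigma\succ0$ are essential. In particular, verifying $A\succeq0$ and that $A\hat\Sigma A=\Sigma$ is the routine but load-bearing calculation, and the nondegeneracy $\hat\Sigma\succ0$ is exactly what lets $\hat\Sigma^{-1/2}$, and hence the optimal map, exist; in the singular case tightness can fail and only the one-sided inclusion survives.
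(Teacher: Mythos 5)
The paper does not actually prove this statement: Theorem~\ref{thm:Gelbrich_inclusion} is imported verbatim from \cite{DRO_Kuhn_risk} and used as a black box, so there is no in-paper proof to compare against. Your argument is the standard one from that literature and is essentially correct: the forward inclusion is Gelbrich's inequality, obtained by expanding the transport cost of an arbitrary coupling into first and second moments and bounding $\mathrm{tr}[C]$ via the block-PSD constraint on the joint covariance, and the converse is attainment of that bound by the PSD affine optimal transport map $T(\xi)=\mu+A(\xi-\hat\mu)$ with $A=\hat\Sigma^{-1/2}(\hat\Sigma^{1/2}\Sigma\hat\Sigma^{1/2})^{1/2}\hat\Sigma^{-1/2}$, whose image stays in $\mathcal{S}$ by the pushforward-closure of Definition~\ref{defn:structural_ambiguity_set}. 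The computations $A\hat\Sigma A=\Sigma$ and $\E_{\hat\P}\|\xi-T(\xi)\|^{2}=\G^{2}$ check out. The one step you gloss over is the final ``collapses the two descriptions'': as Definition~\ref{defn:Gelbrich_ambiguity_set} is written (with $\Q$ ranging over all of $\mathcal{P}(\R^{d})$), the set $\mathcal{G}_{\varepsilon}(\hat\mu,\hat\Sigma)$ contains distributions with admissible moments that are \emph{not} PSD affine pushforwards of $\hat\P$, and realizing every moment pair in $\mathcal{U}_{\varepsilon}$ does not by itself place all such distributions inside $\B_{\varepsilon}^{\|\cdot\|}(\hat\P)$. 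The equality only holds once $\mathcal{G}_{\varepsilon}$ is read as restricted to $\mathcal{S}$ (as in the source reference), in which case each moment pair is realized by a \emph{unique} element of the generated family and your tightness computation finishes the job; this is an imprecision inherited from the paper's paraphrase of the cited definition rather than a flaw in your reasoning, but it is worth stating explicitly.
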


Since the Gelbrich ambiguity set constitutes an \textit{outer} approximation of the associated Wasserstein ambiguity set (under the 2-norm transportation cost), satisfaction of Gelbrich DR-CVaR constraints implies satisfaction of Wasserstein DR-CVaR constraints.
Using this idea, the next result provides a reformulation of the constraints \eqref{eq:DR_CVAR_individual}.
\begin{thm}
    \label{thm:CVaR_convex_constraints}
    The individual DR-CVaR constraints \eqref{eq:DR_CVAR_individual} are satisfied if the following convex constraints are satisfied.
    \begin{equation}
        \label{eq:convex_DR_CVaR_constraints}
        \begin{aligned}
            \beta_j + \alpha_j^\intercal \hat\mu_k (v) + \tau_{jk} &\sqrt{\alpha_j^\intercal\hat\Sigma_k(L)\alpha_j} + \tilde\varepsilon_k(L) \|\alpha_j\| \leq 0, \\
            &\hspace{1cm} \forall j\in[J], \ \forall k\in[N].
        \end{aligned}
    \end{equation}
    where $\hat\mu_{k} \triangleq \bar{x}_{k}(v) = E_k(\mathcal{A}x_0 + \mathcal{B} v)$ is the propagated mean of the nominal distribution, $\hat\Sigma_k \triangleq \tilde{L}_k\Sigma_{w}\tilde{L}_k^\intercal$ is the propagated covariance of the nominal distribution, $\tilde\varepsilon_k \triangleq \varepsilon(1 + \tau_{jk}^2)^{1/2}\sigma_{\max}^{2}(\tilde{L}_k)$, and
    \begin{equation}
        \label{eq:standard_risk_coefficient}
        \tau \triangleq \sup_{\P\in\mathcal{C}(\mu,\Sigma)} \ \cvar_{1-\gamma}^{\P}\left(\frac{\alpha^\intercal (x - \mu)}{\sqrt{\alpha^\intercal\Sigma\alpha}}\right) = \sqrt{\frac{1 - \gamma}{\gamma}},
    \end{equation}
    is the \textit{standard CVaR risk coefficient}, where $\mathcal{C}(\mu,\Sigma)$ denotes the Chebyshev ambiguity set of all distributions in $\mathcal{S}$ with same mean $\mu$ and covariance $\Sigma$.
\end{thm}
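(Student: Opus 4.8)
The plan is to reduce the worst-case CVaR of the affine map $\alpha_j^\intercal x_k + \beta_j$ to a worst-case CVaR of a \emph{linear} function of the disturbance over the standard noise ball $\B_\varepsilon^{\|\cdot\|}(\hat\P_{w})$, and then evaluate it in closed form through the Gelbrich outer approximation. First I would use the translation-equivariance of CVaR together with the decomposition $\vct x_k = \hat\mu_k + \tilde{\vct x}_k$, in which $\hat\mu_k = \bar{x}_k$ is deterministic. Since the shift is by a constant, $\cvar_{1-\gamma_{jk}}^{\P_k}(\alpha_j^\intercal x_k + \beta_j) = (\alpha_j^\intercal\hat\mu_k + \beta_j) + \cvar_{1-\gamma_{jk}}^{\tilde\P_k}(\alpha_j^\intercal\tilde{\vct x}_k)$, so taking the supremum over $\S_k$ peels off the deterministic term and leaves the worst-case CVaR of $\alpha_j^\intercal\tilde{\vct x}_k$ over the error ambiguity set. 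By the error dynamics \eqref{eq:error_dynamics_change_of_vars} and the definition of $\tilde{L}_k$ we have $\tilde{\vct x}_k = \tilde{L}_k\vct w$, hence $\alpha_j^\intercal\tilde{\vct x}_k = (\tilde{L}_k^\intercal\alpha_j)^\intercal\vct w$; writing $a \triangleq \tilde{L}_k^\intercal\alpha_j$, the problem becomes $\sup_{\Q\in\B_\varepsilon^{\|\cdot\|}(\hat\P_{w})}\cvar_{1-\gamma_{jk}}^{\Q}(a^\intercal\vct w)$, a DR-CVaR of a linear loss over the standard Euclidean Wasserstein ball centered at $\hat\P_{w} = \mathcal{N}(0,\Sigma_{w})$.

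Next I would invoke the Gelbrich outer approximation (Theorem~\ref{thm:Gelbrich_inclusion}), $\B_\varepsilon^{\|\cdot\|}(\hat\P_{w})\subseteq\mathcal{G}_\varepsilon(0,\Sigma_{w})$, so the supremum is upper bounded by the same worst-case CVaR over $\mathcal{G}_\varepsilon(0,\Sigma_{w})$. Writing the Gelbrich ball as the union of the Chebyshev sets $\mathcal{C}(\mu,\Sigma)$ over all $(\mu,\Sigma)\in\mathcal{U}_\varepsilon(0,\Sigma_{w})$, I would first evaluate the inner supremum for fixed moments using the standard risk coefficient \eqref{eq:standard_risk_coefficient}: standardizing the loss as $a^\intercal\vct w = a^\intercal\mu + \sqrt{a^\intercal\Sigma a}\,Z$ and applying translation- and positive-homogeneity of CVaR gives $\sup_{\Q\in\mathcal{C}(\mu,\Sigma)}\cvar_{1-\gamma_{jk}}^{\Q}(a^\intercal\vct w) = a^\intercal\mu + \tau_{jk}\sqrt{a^\intercal\Sigma a}$, with $\tau_{jk} = \sqrt{(1-\gamma_{jk})/\gamma_{jk}}$. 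The remaining task is the outer maximization of $a^\intercal\mu + \tau_{jk}\sqrt{a^\intercal\Sigma a}$ over $\mathcal{U}_\varepsilon(0,\Sigma_{w})$.

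To solve the outer problem I would exploit that the Gelbrich distance $\G$ separates additively into a mean part $\|\mu\|^2$ and a covariance (Bures--Wasserstein) part, while the objective is likewise separable. Allocating a budget $\rho_1^2 + \rho_2^2 \le \varepsilon^2$ to the two parts, the mean maximization gives $\max_{\|\mu\|\le\rho_1}a^\intercal\mu = \rho_1\|a\|$, while the covariance maximization gives $\sup_{(\mu,\Sigma)}\sqrt{a^\intercal\Sigma a} = \sqrt{a^\intercal\Sigma_{w} a} + \rho_2\|a\|$ over covariances at Bures--Wasserstein distance at most $\rho_2$ from $\Sigma_{w}$; the latter follows from the data-processing property of $\W$ under the linear functional $\vct w\mapsto a^\intercal\vct w$ together with the closed-form distance $|\sqrt{a^\intercal\Sigma a} - \sqrt{a^\intercal\Sigma_{w} a}|$ between the induced univariate Gaussians. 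Optimizing $\rho_1 + \tau_{jk}\rho_2$ over the budget via Cauchy--Schwarz yields the factor $\varepsilon\sqrt{1+\tau_{jk}^2}$, so that the worst-case CVaR equals $\alpha_j^\intercal\hat\mu_k + \beta_j + \tau_{jk}\sqrt{\alpha_j^\intercal\hat\Sigma_k\alpha_j} + \varepsilon\sqrt{1+\tau_{jk}^2}\,\|\tilde{L}_k^\intercal\alpha_j\|$, after substituting $a = \tilde{L}_k^\intercal\alpha_j$ and $\hat\Sigma_k = \tilde{L}_k\Sigma_{w}\tilde{L}_k^\intercal$. Bounding the last term by the operator-norm inequality $\|\tilde{L}_k^\intercal\alpha_j\| \le \sigma_{\max}(\tilde{L}_k)\|\alpha_j\|$ produces the radius coefficient $\tilde\varepsilon_k$ and the constraint \eqref{eq:convex_DR_CVaR_constraints}; because every step is an upper bound, satisfying \eqref{eq:convex_DR_CVaR_constraints} is \emph{sufficient} for \eqref{eq:DR_CVAR_individual}.

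I expect the main obstacle to be the covariance maximization over the Bures--Wasserstein ball, i.e.\ establishing $\sup\sqrt{a^\intercal\Sigma a} = \sqrt{a^\intercal\Sigma_{w} a} + \rho_2\|a\|$ and verifying that the maximizer is attained within $\mathcal{U}_\varepsilon(0,\Sigma_{w})$; this is where the analytic content of the Gelbrich geometry enters, since the $\le$ direction is immediate from data-processing but the matching $\ge$ requires an explicit worst-case covariance. By contrast, the reduction to a linear loss, the Gelbrich inclusion, and the Cauchy--Schwarz budget split are essentially bookkeeping given Theorems~\ref{thm:linear_pushforward} and~\ref{thm:Gelbrich_inclusion} and the risk coefficient \eqref{eq:standard_risk_coefficient}.
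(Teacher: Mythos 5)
Your proposal is correct in its essentials and follows the same skeleton as the paper's Appendix~A --- Gelbrich outer approximation (Theorem~\ref{thm:Gelbrich_inclusion}), decomposition into Chebyshev sets, the standard risk coefficient for the inner supremum, and then a worst-case moment problem over $\mathcal{U}_{\varepsilon}$ --- but it diverges in two genuinely different choices. First, you keep the problem in the noise space: you write $\alpha_j^\intercal\tilde{\vct x}_k=(\tilde{L}_k^\intercal\alpha_j)^\intercal\vct w$ and take the supremum over $\B_{\varepsilon}^{\|\cdot\|}(\hat\P_w)$ directly, which is exact (the supremum of $\cvar$ of a composed loss over a ball equals the supremum over the pushforward set) and bypasses the paper's relaxation \eqref{eq:ambiguity_set_relaxation} of the state-space ambiguity set $\B_{\varepsilon}^{\|\cdot\|\circ\tilde{L}_k^\dagger}$ into a Euclidean ball of enlarged radius $\bar\varepsilon=\varepsilon\sigma_{\max}^2(\tilde{L}_k)$. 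Second, for the outer moment problem the paper runs a full Lagrangian dual, solving the matrix supremum via the substitution $B=(\hat\Sigma^{1/2}\Sigma\hat\Sigma^{1/2})^{1/2}$, a Lyapunov equation, and Sherman--Morrison; you instead exploit the additive separability of the Gelbrich constraint into $\|\mu\|^2$ plus a Bures--Wasserstein term, bound the covariance part by the contraction $|\sqrt{a^\intercal\Sigma a}-\sqrt{a^\intercal\Sigma_w a}|\le\|a\|\,\G((0,\Sigma),(0,\Sigma_w))^{1/2}$, and finish with Cauchy--Schwarz on the budget split. For a linear loss this is cleaner and, as you note, only the upper-bound direction is needed for sufficiency, so the ``hard'' attainment step you flag can be skipped entirely.

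There is, however, one concrete loose end: your chain terminates in the term $\varepsilon\sqrt{1+\tau_{jk}^2}\,\|\tilde{L}_k^\intercal\alpha_j\|\le\varepsilon\sqrt{1+\tau_{jk}^2}\,\sigma_{\max}(\tilde{L}_k)\|\alpha_j\|$, with $\sigma_{\max}$ to the \emph{first} power, whereas the theorem's coefficient is $\tilde\varepsilon_k=\varepsilon(1+\tau_{jk}^2)^{1/2}\sigma_{\max}^{2}(\tilde{L}_k)$. Your claim that the operator-norm inequality ``produces the radius coefficient $\tilde\varepsilon_k$'' is therefore not literally true. The squared exponent in the paper comes from its relaxation \eqref{eq:ambiguity_set_relaxation}, which you never invoke. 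Your bound is the sharper (and dimensionally consistent) one, but it only implies the constraint \eqref{eq:convex_DR_CVaR_constraints} as stated when $\sigma_{\max}(\tilde{L}_k)\ge 1$, since otherwise $\sigma_{\max}^2<\sigma_{\max}$ and satisfying \eqref{eq:convex_DR_CVaR_constraints} would not force your upper bound below zero. You should either state that you are proving the (stronger) version with $\sigma_{\max}(\tilde{L}_k)$ in place of $\sigma_{\max}^2(\tilde{L}_k)$, or add the one-line observation reconciling the two exponents.
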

\begin{proof}
    See Appendix~A.
\end{proof}

The constraints in \eqref{eq:convex_DR_CVaR_constraints} are convex, but nonlinear in the decision variable $L$.
However, using Schur complement we can further reformulate these constraints as tractable second-order cone constraints (SOCC) and linear matrix inequalities (LMIs).
\begin{cor}
    \label{cor:CVaR_tractable_constraints}
    The convex constraints \eqref{eq:convex_DR_CVaR_constraints} are equivalent to the following tractable constraints.
    \begin{subequations}~\label{eq:tractable_DR_CVaR_constraints}
        \begin{align}
            &\beta_j + \alpha_j^\intercal\hat\mu_k(v) + \tau_{jk}\|\Sigma_{w}^{1/2}\mathcal{D}^\intercal (I + \mathcal{B}L)^\intercal E_k^\intercal \alpha_{j}\| \nonumber \\
            &\hspace{2cm} + \varepsilon \rho_k \|\alpha_j\| \sqrt{1 + \tau_{jk}^2}\leq 0, \quad \forall j \ \forall k, \\
            &\begin{bmatrix}
                I & E_k(I + \mathcal{B}L)\mathcal{D} \\
                \mathcal{D}^\intercal (I + \mathcal{B}L)^\intercal E_k^\intercal & \rho_k I
            \end{bmatrix} \succeq 0 \quad \forall k,
        \end{align}
    \end{subequations}
    with respect to the decision variables $\{v, L, \rho_k\}$.
\end{cor}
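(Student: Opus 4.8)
The plan is to transform the two terms of \eqref{eq:convex_DR_CVaR_constraints} that are nonlinear in $L$, namely the variance term $\sqrt{\alpha_j^\intercal\hat\Sigma_k(L)\alpha_j}$ and the spectral-norm term $\tilde\varepsilon_k(L)\|\alpha_j\|$, into a single scalar conic inequality plus an auxiliary linear matrix inequality. The two terms are handled separately and then reassembled. First I would rewrite the variance term as a Euclidean norm. Substituting $\hat\Sigma_k = \tilde{L}_k\Sigma_{w}\tilde{L}_k^\intercal$ with $\tilde{L}_k = E_k(I+\mathcal{B}L)\mathcal{D}$ and factoring $\Sigma_{w} = \Sigma_{w}^{1/2}\Sigma_{w}^{1/2}$ through the symmetric PSD square root gives $\alpha_j^\intercal\hat\Sigma_k\alpha_j = \|\Sigma_{w}^{1/2}\mathcal{D}^\intercal(I+\mathcal{B}L)^\intercal E_k^\intercal\alpha_j\|^2$, so that $\sqrt{\alpha_j^\intercal\hat\Sigma_k\alpha_j}$ equals exactly the norm appearing as the second term of \eqref{eq:tractable_DR_CVaR_constraints}. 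Since the argument of this norm is affine in $L$, the term is a bona fide second-order cone term.

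Next I would address the remaining term $\tilde\varepsilon_k(L)\|\alpha_j\| = \varepsilon\sqrt{1+\tau_{jk}^2}\,\sigma_{\max}^2(\tilde{L}_k)\,\|\alpha_j\|$, whose only troublesome factor is the squared spectral norm $\sigma_{\max}^2(\tilde{L}_k) = \lambda_{\max}(\tilde{L}_k\tilde{L}_k^\intercal)$. This quantity is convex in $L$ but not directly conic-representable, so I would introduce the epigraph variable $\rho_k$ and replace $\sigma_{\max}^2(\tilde{L}_k)$ by $\rho_k$ subject to the constraint $\rho_k \geq \sigma_{\max}^2(\tilde{L}_k)$, which is equivalent to $\rho_k I \succeq \tilde{L}_k^\intercal\tilde{L}_k$. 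Applying the Schur complement to the block matrix whose leading block is $I \succ 0$ shows that $\rho_k I - \tilde{L}_k^\intercal\tilde{L}_k \succeq 0$ holds if and only if the $2\times 2$ block matrix in \eqref{eq:tractable_DR_CVaR_constraints} is positive semidefinite, upon recognizing $E_k(I+\mathcal{B}L)\mathcal{D} = \tilde{L}_k$ in its off-diagonal blocks. Substituting $\rho_k$ in place of $\sigma_{\max}^2(\tilde{L}_k)$ in the scalar inequality then yields the first line of \eqref{eq:tractable_DR_CVaR_constraints}.

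To upgrade sufficiency to full equivalence, I would exploit monotonicity: because $\varepsilon$, $\|\alpha_j\|$, and $\sqrt{1+\tau_{jk}^2}$ are all nonnegative, the scalar inequality is nondecreasing in $\rho_k$. Hence any $(v,L)$ feasible for \eqref{eq:convex_DR_CVaR_constraints} admits the tight choice $\rho_k = \sigma_{\max}^2(\tilde{L}_k)$, which satisfies the LMI and makes the scalar inequality coincide with \eqref{eq:convex_DR_CVaR_constraints}; conversely, any feasible triple $(v,L,\rho_k)$ of \eqref{eq:tractable_DR_CVaR_constraints} satisfies $\rho_k \geq \sigma_{\max}^2(\tilde{L}_k)$, so by monotonicity the original constraint holds. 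The main obstacle is the spectral-norm term: the crux is verifying that its Schur-complement lift is simultaneously valid and tight, so that the reformulation is a genuine equivalence rather than a conservative relaxation. I would note in passing that whether one writes $\tilde{L}_k\tilde{L}_k^\intercal$ or $\tilde{L}_k^\intercal\tilde{L}_k$ is immaterial, since $\lambda_{\max}$ agrees for both and equals $\sigma_{\max}^2(\tilde{L}_k)$.
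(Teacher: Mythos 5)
Your proposal is correct and follows essentially the same route as the paper's Appendix~B: substituting $\hat\Sigma_k = \tilde{L}_k\Sigma_w\tilde{L}_k^\intercal$ to turn the variance term into a second-order cone term, introducing the epigraph variable $\rho_k$ for $\sigma_{\max}^2(\tilde{L}_k)$, and converting $\rho_k I \succeq \tilde{L}_k^\intercal\tilde{L}_k$ into the LMI via a Schur complement. Your added monotonicity argument for tightness of the epigraph lift is a small refinement the paper leaves implicit, not a different approach.
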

\begin{proof}
    See Appendix~B.
\end{proof}
Along the same lines, in the next section, we reformulate the DR objective function \eqref{eq:DR_cost} as a tractable convex program.
\subsection{DR Objective Reformulation}
Substituting the error dynamics \eqref{eq:error_dynamics} and feedback control $\tilde{\vct u} = K \tilde{\vct x} = L \mathcal{D} \vct w$ into the cost \eqref{eq:DR_cost} yields
\begin{align}
    \mathcal{J} &= \beta \sum_{k=0}^{N-1}\|v_k\| + \max_{\P \in \mathcal{W}}\E_{\P}\left(\tilde{\vct x}^\intercal (\mathcal{Q} + K^\intercal \mathcal{R} K)\tilde{\vct x}\right) \nonumber \\
    &= \beta \sum_{k=0}^{N-1}\|E_k v\| + \max_{\P\in\B_{\varepsilon}^{\|\cdot\|}(\hat\P)}\E_{\P}(\vct w^\intercal \Xi(L) \vct w), \label{eq:DR_cost_w}
\end{align}
where $\mathcal{Q} \triangleq \mathrm{blkdiag}(Q_0,\ldots, Q_{N-1},0)\succeq 0, R \triangleq \mathrm{blkdiag}(R_0,\ldots, R_{N-1})\succ 0$ are the augmented cost matrices, and $\Xi \triangleq \mathcal{D}^\intercal\left((I + \mathcal{B}L)\mathcal{Q}(I + \mathcal{B} L) + L^\intercal\mathcal{R}L\right)\mathcal{D} \succeq 0$.
Thus, we aim to find the worst-case expected value of a quadratic form over the Wasserstein ambiguity set centered around the nominal distribution $\hat\P = \mathcal{N}(0, \Sigma_{w})$.
To this end, it can be shown \cite{DRO_Kuhn_risk} 
that this worst-case expectation is \textit{equivalent} to the worst-case expectation with respect to the associated Gelbrich ambiguity set, provided that the nominal distribution is elliptical.
The following result provides a reformulation of the DR cost.
\begin{thm}
    \label{thm:DR_cost_convex}
    The DR quadratic cost in the objective function \eqref{eq:DR_cost_w} is equivalent to the convex program
    \begin{equation}
    \label{eq:convex_DR_cost}
        \min_{\lambda I \succ \Xi(L)} \ \lambda(\varepsilon^2 - \mathrm{tr}[\Sigma_{w}] + \lambda \mathrm{tr}[\Sigma_{w} (\lambda I - \Xi(L))^{-1}]).
    \end{equation}
\end{thm}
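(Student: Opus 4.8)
The plan is to first collapse the worst-case expectation appearing in \eqref{eq:DR_cost_w} from the Wasserstein ball onto the associated Gelbrich ambiguity set, and then solve the resulting finite-dimensional moment problem by Lagrangian duality. Since $\Xi(L)\succeq 0$ and the nominal distribution $\hat\P=\mathcal N(0,\Sigma_{w})$ is elliptical, the reduction cited from \cite{DRO_Kuhn_risk} guarantees that $\max_{\P\in\B_{\varepsilon}^{\|\cdot\|}(\hat\P)}\E_{\P}(\vct w^\intercal\Xi\vct w)$ coincides with the worst-case expectation over $\mathcal G_{\varepsilon}(0,\Sigma_{w})$. Because a quadratic form depends on a distribution only through its first two moments, $\E_{\P}[\vct w^\intercal\Xi\vct w]=\mu^\intercal\Xi\mu+\mathrm{tr}[\Xi\Sigma]$, so the problem becomes the maximization of $\mu^\intercal\Xi\mu+\mathrm{tr}[\Xi\Sigma]$ over all pairs $(\mu,\Sigma)$ whose (squared) Gelbrich distance to $(0,\Sigma_{w})$ is at most $\varepsilon^2$.

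Next I would attach a single multiplier $\lambda\ge 0$ to the Gelbrich budget and evaluate the dual function by maximizing over $\mu$ and $\Sigma\succeq 0$ separately, since the Lagrangian separates additively. The mean block contributes $\sup_{\mu}\mu^\intercal(\Xi-\lambda I)\mu$, which equals $0$ (attained at $\mu=0$) when $\lambda I\succeq\Xi$ and is $+\infty$ otherwise; this is precisely what forces the dual-feasibility condition $\lambda I\succ\Xi(L)$ and shows that the worst-case mean stays at the nominal value. The covariance block requires maximizing $\mathrm{tr}[(\Xi-\lambda I)\Sigma]+2\lambda\,\mathrm{tr}[(\Sigma_{w}^{1/2}\Sigma\Sigma_{w}^{1/2})^{1/2}]$ over $\Sigma\succeq 0$. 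Setting $M\triangleq\lambda I-\Xi\succ 0$ and differentiating via $\d\,\mathrm{tr}[A^{1/2}]=\tfrac12\mathrm{tr}[A^{-1/2}\,\d A]$ with $A=\Sigma_{w}^{1/2}\Sigma\Sigma_{w}^{1/2}$, the stationarity condition gives the closed-form worst-case covariance $\Sigma^\star=\lambda^2 M^{-1}\Sigma_{w}M^{-1}$. Substituting $\Sigma^\star$ collapses both trace terms onto $\lambda^2\,\mathrm{tr}[\Sigma_{w}M^{-1}]$, so the covariance block equals $\lambda^2\,\mathrm{tr}[\Sigma_{w}(\lambda I-\Xi)^{-1}]$; adding the constant budget terms $\lambda\varepsilon^2-\lambda\,\mathrm{tr}[\Sigma_{w}]$ and minimizing over $\lambda I\succ\Xi(L)$ reproduces exactly \eqref{eq:convex_DR_cost}.

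The final step is to certify that no duality gap is incurred. I would argue this by noting that, after fixing $\Sigma$, the mean subproblem is a trust-region problem (a convex quadratic maximized over a Euclidean ball), for which the S-procedure yields exact Lagrangian duality, while the covariance subproblem is a genuine concave maximization, since $\Sigma\mapsto\mathrm{tr}[(\Sigma_{w}^{1/2}\Sigma\Sigma_{w}^{1/2})^{1/2}]$ is concave and the remaining terms are linear; Slater's condition holds because $(0,\Sigma_{w})$ lies strictly inside the ball whenever $\varepsilon>0$. I expect the main obstacle to be the covariance subproblem, namely handling the non-separable Bures--Wasserstein square-root term $\mathrm{tr}[(\Sigma_{w}^{1/2}\Sigma\Sigma_{w}^{1/2})^{1/2}]$ correctly---both in computing its matrix gradient and in arguing that the interior stationary point $\Sigma^\star\succ 0$ (guaranteed by $M\succ 0$ and $\Sigma_{w}\succ 0$) is the global maximizer by concavity---rather than in the routine algebra that follows.
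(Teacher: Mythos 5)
Your proposal follows essentially the same route as the paper's Appendix~C: pass from the Wasserstein ball to the Gelbrich set, reduce the worst-case expectation of the quadratic form to a moment problem over $(\mu,\Sigma)$, dualize the Gelbrich budget, and solve the covariance block in closed form --- your $\Sigma^\star=\lambda^2(\lambda I-\Xi)^{-1}\Sigma_{w}(\lambda I-\Xi)^{-1}$ and the resulting value $\lambda^2\mathrm{tr}[\Sigma_{w}(\lambda I-\Xi)^{-1}]$ are exactly the paper's, obtained there via the substitution $B=(\hat\Sigma^{\frac{1}{2}}\Sigma\hat\Sigma^{\frac{1}{2}})^{\frac{1}{2}}$ and a Lyapunov-type stationarity equation rather than by differentiating the Bures square-root term directly. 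The one place your argument is looser than the paper's is the zero-duality-gap step: since the single budget $\varepsilon^2$ couples the mean and covariance blocks, strong duality for each block ``separately'' does not immediately transfer to the coupled problem; the paper instead dualizes only in $\Sigma$ (where Slater applies for each fixed $\mu$) and then swaps $\sup_{\mu}$ with $\inf_{\gamma}$ via Sion's minimax theorem over the compact ball $\|\mu-\hat\mu\|\leq\varepsilon$ --- a patch you could adopt, or you could simply exhibit the primal-feasible point $(0,\Sigma^\star(\lambda^\star))$ attaining your dual value.
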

\begin{proof}
    See Appendix~C.
\end{proof}

Similar to the DR-CVaR constraints \eqref{eq:convex_DR_CVaR_constraints}, the reformulated DR cost \eqref{eq:convex_DR_cost} is convex but nonlinear in the decision variables $\gamma$ and $L$.
Using Schur complements, we can reformulate \eqref{eq:convex_DR_cost} as the following SDP.
\begin{cor}
    \label{cor:DR_cost_tractable}
    The convex program \eqref{eq:convex_DR_cost} is equivalent to the semi-definite program
    \begin{subequations}~\label{eq:tractable_DR_cost}
        \begin{align}
            &\min_{\substack{\lambda \geq 0 \\ \Gamma,\Psi \succeq 0}} \ &&\lambda(\varepsilon^2 - \mathrm{tr}[\Sigma_{w}]) + \mathrm{tr}[\Gamma], \label{eq:DR_cost_obj} \\
            &\hspace{0.5cm}\mathrm{s.t.} 
            &&\begin{bmatrix}
                \Gamma & \lambda \Sigma_{w}^{1/2} \\
                \lambda \Sigma_{w}^{1/2} & \Psi
            \end{bmatrix} \succeq 0, \label{eq:DR_cost_constraint1} \\
            &&&\begin{bmatrix}
                \lambda I - \mathcal{D}^\intercal \tilde{M}(L)\mathcal{D} - \Psi & \mathcal{D}^\intercal L^\intercal \\
                L\mathcal{D} & \tilde{\mathcal{R}}^{-1}
            \end{bmatrix} \succeq 0, \label{eq:DR_cost_constraint2}
        \end{align}
    \end{subequations}
    where $\tilde{M} \triangleq \mathcal{Q} + M(L) + M^\intercal(L), \ M \triangleq \mathcal{Q}\mathcal{B}L$, and $\tilde{\mathcal{R}}\triangleq \mathcal{B}^\intercal\mathcal{Q}\mathcal{B} + \mathcal{R}$.
\end{cor}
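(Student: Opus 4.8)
The plan is to introduce two matrix slack variables, $\Psi$ and $\Gamma$, that isolate the two sources of nonlinearity in \eqref{eq:convex_DR_cost} --- the matrix inverse $(\lambda I - \Xi(L))^{-1}$ and the dependence of $\Xi(L)$ on $L$ that is quadratic --- and then to relax each via a Schur complement, finally verifying that both relaxations are tight at the optimum so that the SDP objective coincides with that of \eqref{eq:convex_DR_cost}. First I would unpack $\Xi(L)$: expanding $(I+\mathcal{B}L)^\intercal\mathcal{Q}(I+\mathcal{B}L) + L^\intercal\mathcal{R}L = \mathcal{Q} + M + M^\intercal + L^\intercal\tilde{\mathcal{R}}L$ with $M = \mathcal{Q}\mathcal{B}L$ and $\tilde{\mathcal{R}} = \mathcal{B}^\intercal\mathcal{Q}\mathcal{B} + \mathcal{R}$ gives $\Xi(L) = \mathcal{D}^\intercal\tilde{M}(L)\mathcal{D} + \mathcal{D}^\intercal L^\intercal\tilde{\mathcal{R}}L\mathcal{D}$, where $\tilde{M} = \mathcal{Q} + M + M^\intercal$. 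This cleanly separates the part of $\Xi(L)$ that is affine in $L$ from the part that is quadratic.

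Next I would introduce $\Psi$ as a positive-semidefinite lower-bound proxy for $\lambda I - \Xi(L)$, i.e.\ enforce $0 \preceq \Psi \preceq \lambda I - \Xi(L)$. Substituting the expansion above, the upper inequality reads $\lambda I - \mathcal{D}^\intercal\tilde{M}(L)\mathcal{D} - \Psi - (L\mathcal{D})^\intercal\tilde{\mathcal{R}}(L\mathcal{D}) \succeq 0$; since $\tilde{\mathcal{R}} \succ 0$, a Schur complement on the quadratic term $(L\mathcal{D})^\intercal\tilde{\mathcal{R}}(L\mathcal{D})$ converts this into exactly the LMI \eqref{eq:DR_cost_constraint2}. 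Independently, I would introduce the epigraph matrix $\Gamma$ through $\Gamma \succeq \lambda^2\Sigma_w^{1/2}\Psi^{-1}\Sigma_w^{1/2}$, which by a second Schur complement (valid because $\Psi \succ 0$ on the open domain $\lambda I \succ \Xi(L)$ of \eqref{eq:convex_DR_cost}) is equivalent to \eqref{eq:DR_cost_constraint1}. Taking traces and using the cyclic property then yields $\mathrm{tr}[\Gamma] \geq \lambda^2\,\mathrm{tr}[\Sigma_w\Psi^{-1}]$, and replacing the nonlinear term in the objective of \eqref{eq:convex_DR_cost} by $\mathrm{tr}[\Gamma]$ reproduces \eqref{eq:DR_cost_obj}.

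The crux --- and the step I expect to be the main obstacle --- is showing that this two-stage relaxation is tight, i.e.\ that at the optimum $\mathrm{tr}[\Gamma] = \lambda^2\,\mathrm{tr}[\Sigma_w(\lambda I - \Xi(L))^{-1}]$. For fixed $(\lambda, L, \Psi)$, minimizing $\mathrm{tr}[\Gamma]$ drives $\Gamma$ down to the Schur bound, so the epigraph inequality holds with equality, $\mathrm{tr}[\Gamma] = \lambda^2\,\mathrm{tr}[\Sigma_w\Psi^{-1}]$. It then remains to push $\Psi$ to its largest admissible value: because the matrix inverse is operator-antitone, $\mathrm{tr}[\Sigma_w\Psi^{-1}]$ is nonincreasing in $\Psi$ in the Loewner order (as $\Sigma_w \succeq 0$), so the minimizing $\Psi$ saturates the constraint at $\Psi = \lambda I - \Xi(L)$, which is feasible and positive definite on the domain of \eqref{eq:convex_DR_cost}. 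Substituting $\Psi = \lambda I - \Xi(L)$ back into $\mathrm{tr}[\Gamma]$ recovers precisely the nonlinear term $\lambda^2\,\mathrm{tr}[\Sigma_w(\lambda I - \Xi(L))^{-1}]$, so the two objectives agree value-for-value and the equivalence is established. A minor technical point to check along the way is consistency of the feasible domains: feasibility of \eqref{eq:DR_cost_constraint2} together with $\Psi \succeq 0$ forces $\lambda I \succeq \Xi(L)$, matching the constraint $\lambda I \succ \Xi(L)$ of \eqref{eq:convex_DR_cost} up to closure.
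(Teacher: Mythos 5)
Your proposal is correct and follows essentially the same route as the paper's Appendix D: the same expansion $\Xi(L) = \mathcal{D}^\intercal\tilde{M}(L)\mathcal{D} + \mathcal{D}^\intercal L^\intercal\tilde{\mathcal{R}}L\mathcal{D}$, the same two epigraph variables $\Gamma$ and $\Psi$ with the same pair of Schur complements, and the same tightness argument via monotonicity of the trace in the Loewner order and operator antitonicity of the matrix inverse. Your explicit remarks on saturation of both slack constraints at the optimum and on the closure of the domain $\lambda I \succ \Xi(L)$ are slightly more detailed than the paper's, but the substance is identical.
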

\begin{proof}
    See Appendix~D.
\end{proof}
Lastly, in the next section, we reformulate the terminal constraints that require the terminal distributional uncertainty of the state to lie within a desired target ambiguity set $\S_{f}$.
\subsection{Terminal Constraints}
Using \eqref{eq:dist_uncertainty_state}, the terminal propagated ambiguity set of the state is given by
\begin{equation}
    \label{eq:terminal_ambiguity_state}
    \S_{N} = \B_{\varepsilon}^{\|\cdot\|\circ\tilde{L}_{N}^\dagger}(\hat\P_{N}), \quad \hat\P_{N} = \mathcal{N}(\bar{x}_{N}(v), \tilde{L}_{N}\Sigma_{w}\tilde{L}_{N}^\intercal).
\end{equation}
To ensure the inclusion $\S_{N} \subseteq \B_{\delta}^{\|\cdot\|}(\hat\P_f)$, we first note that it can be shown \cite{capture_propagate_control_OT} that
\begin{equation}
    \label{eq:ambiguity_set_relaxation}
    \B_{\varepsilon}^{\|\cdot\|\circ\tilde{L}^\dagger}(\hat\P) \subseteq \B_{\varepsilon}^{\sigma_{\min}^2(\tilde{L})\|\cdot\|}(\hat\P) = \B_{\varepsilon\sigma_{\max}^{2}(\tilde{L})}^{\|\cdot\|}(\hat\P).
\end{equation}
Thus, since both $\hat\P_{N}$ and $\hat\P_f$ are normally distributed, it is sufficient to enforce the constraints
\begin{equation}
    \label{eq:terminal_constraints}
    \bar{x}_{N}(v) = \mu_{f}, \quad \Sigma_{x_{N}}(L) \preceq \Sigma_{f}, \quad \varepsilon \sigma_{\max}^{2}(\tilde{L}_{N}) \leq \delta.
\end{equation}
\begin{rem}
    The first two constraints in \eqref{eq:terminal_constraints} are equivalent to those in the traditional CS literature \cite{Max2}.
    Indeed, the main goal of covariance control is to steer the covariance (and mean) of the state distribution to some desired terminal covariance, where the relaxation $\Sigma_{x_{N}} \preceq \Sigma_{f}$ is often introduced to make the terminal constraints tractable.
    In this context, however, the first two constraints align the center distributions of the terminal state, while the extra constraint in \eqref{eq:terminal_constraints} can be interpreted as a way to robustify against distributionally uncertainty in the terminal state, providing an extra layer of safety guarantee against unknown disturbances.
\end{rem}

To this end, the terminal mean constraint in \eqref{eq:terminal_constraints} is simply a linear constraint in $v$, given by
\begin{equation}
    \label{eq:terminal_mean_constraint}
    E_{N}(\mathcal{A} x_0 + \mathcal{B} v) - \mu_{f} = 0.
\end{equation}
Second, the terminal covariance constraint in \eqref{eq:terminal_constraints} can be written as the following LMI
\begin{align}
    \begin{bmatrix}
        \Sigma_{f} & E_{N}(I + \mathcal{B}L)\mathcal{D}\Sigma_{w}^{1/2} \\
        \Sigma_{w}^{1/2}\mathcal{D}^\intercal (I + \mathcal{B}L)^\intercal E_{N}^\intercal & I
    \end{bmatrix} \succeq 0. \label{eq:terminal_cov_constraint}
\end{align}
Lastly, noting that $\sigma_{\max}(L) = \|L\|$ and using the Schur complement, the terminal distributional uncertainty constraint can be written as the LMI
\begin{equation}
    \label{eq:terminal_DR_constraint}
    \begin{bmatrix}
        I & E_{N}(I + \mathcal{B}L)\mathcal{D} \\
        \mathcal{D}^\intercal (I + \mathcal{B} L)^\intercal E_{N}^\intercal & (\delta/\varepsilon) I
    \end{bmatrix} \succeq 0.
\end{equation}
In summary, combining all ingredients of Sections~IV.A-IV.C, the DR-DS problem can be solved as the SDP
\begin{align*}
    &\min_{\substack{v, K \\ \rho_k, \lambda \geq 0 \\ \Gamma, \Psi \succeq 0}} \ &&\beta\sum_{k=0}^{N-1} \|E_k v\| + \lambda(\varepsilon^2 - \mathrm{tr}[\Sigma_{w}]) + \mathrm{tr}[\Gamma] \\
    &\quad \mathrm{s.t.} &&\eqref{eq:tractable_DR_CVaR_constraints}, \eqref{eq:DR_cost_constraint1}, \eqref{eq:DR_cost_constraint2}, \eqref{eq:terminal_mean_constraint},\eqref{eq:terminal_cov_constraint},\eqref{eq:terminal_DR_constraint}.
\end{align*}

\section{NUMERICAL EXAMPLES}
\subsection{Double Integrator Path Planning}
As a first example to showcase the proposed DR-DS framework, consider a 2D double integrator integrator with dynamics
\begin{equation*}
    A = 
    \begin{bmatrix}
        I_{2} & \Delta T I_{2} \\ 
        0_{2} & I_{2}
    \end{bmatrix}, \quad B = 
    \begin{bmatrix}
        \frac{\Delta t^2}{2} I_{2} \\
        \Delta t I_{2}
    \end{bmatrix}, \quad D = 5\times 10^{-3} I_{4},
\end{equation*}
with initial state $x_0 = [-1, 2, 0.1, -0.1]^\intercal$, and i.i.d. nominal disturbances drawn from $\hat{\mathbb{P}}_{w} = \mathcal{N}(0, I_{4})$.
The nominal target state distribution is $\hat{\mathbb{P}}_f = \mathcal{N}(0, (0.1/3)^2 I_{4})$, and the desired target ambiguity set has radius $\delta= 0.05$.
Lastly, the planning horizon has $N = 20$ time steps, $\Delta t = 0.3$, and we enforce probabilistic constraints with respect to the polytope defined by $\alpha_{1, [8 : N]} = [-1, 0, 0, 0]^\intercal, \ \alpha_{2, [8 : N]} = [1, 0, 0, 0]^\intercal$, and $b_{1, [8 : N]} = b_{2, [8 : N]} = -0.2$, which probabilistically enforces $|x| \leq 0.2$ in the terminal stage of planning, with probability $\gamma_{jk} = 0.05$ along each individual constraint.
We compare the performance of the DR-DS control to that of the baseline CS solution with chance constraints \cite{Max2}.
The convex programs were all solved using the YALMIP optimization suite \cite{YALMIP} with the MOSEK solver \cite{MOSEK}.

Firstly, we compare the optimal solutions subject to the nominal disturbances in Figure~\ref{fig:double_integrator_nominal}.
\begin{figure}[!htb]
    \centering
    \includegraphics[width=\linewidth, trim={1cm 0.5cm 1cm 1.5cm}, clip]{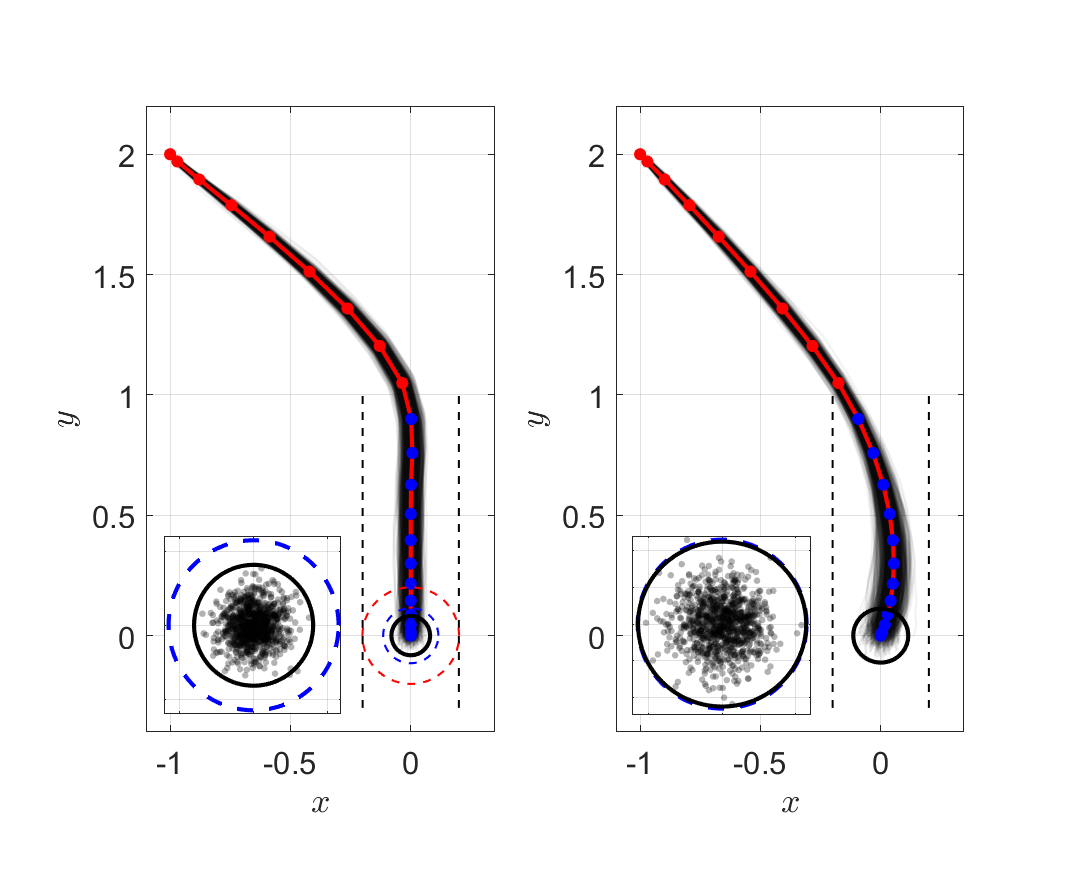}
    \caption{Optimal trajectories for (left) DR-DS solution with $\varepsilon = 15$, and (right) baseline CS solution, subject to nominal disturbance $\mathbb{P}_w$.}
    \label{fig:double_integrator_nominal}
\end{figure}
Clearly, in the nominal case, when the disturbance is well-understood, both solutions are able to successfully steer to the desired terminal distribution and satisfy the constraints, since by construction, this is what CS is designed to do.
Additionally, the empirical risk for the DR-CVaR constraints and (conservative) chance-constraints are both zero for 1,000 Monte-Carlo samples.
Interestingly, however, note that the DR-DS solution steers to a smaller terminal covariance compared to that of CS.
For reference, the red covariance ellipse in the left plot in Figure~\ref{fig:double_integrator_nominal} is the \textit{maximal} normal distribution in the target terminal ambiguity set with respect to the underlying structure of zero-mean Gaussian's, which is computed from $\W(\Sigma_f, \eta_f^2\Sigma_f) = \delta$, or equivalently, by using the Wasserstein distance between two normal distributions, as
\begin{equation}
    \label{eq:maximal_covariance}
    \eta_{f} = 1 + \frac{\delta}{\sqrt{\mathrm{tr}(\Sigma_f)}}.
\end{equation}
Thus, the DR-DS framework will steer the state distribution to $\Sigma_{N} \preceq \eta_f^2 \Sigma_{f}$ for any disturbance $\mathbb{P}_{w} \in \mathbb{B}_{\varepsilon}^{\|\cdot\|}(\hat{\mathbb{P}}_{w})$.

To illustrate this, Figure~\ref{fig:double_integrator_maximal} shows the performance of the two methods when the noise distribution is now given by $\P_{w} = \mathcal{N}(0, \eta_{w}^2 I)$, where, as in \eqref{eq:maximal_covariance}, $\eta_{w} = 1 + \varepsilon / \sqrt{\mathrm{tr}(\Sigma_{w})}$ is the maximal covariance in the ambiguity set $\mathbb{B}_{\varepsilon}^{\|\cdot\|}(\hat{\P}_{w})$.
\begin{figure}[!htb]
    \centering
    \includegraphics[width=\linewidth, trim={1cm 0.5cm 1cm 1.5cm}, clip]{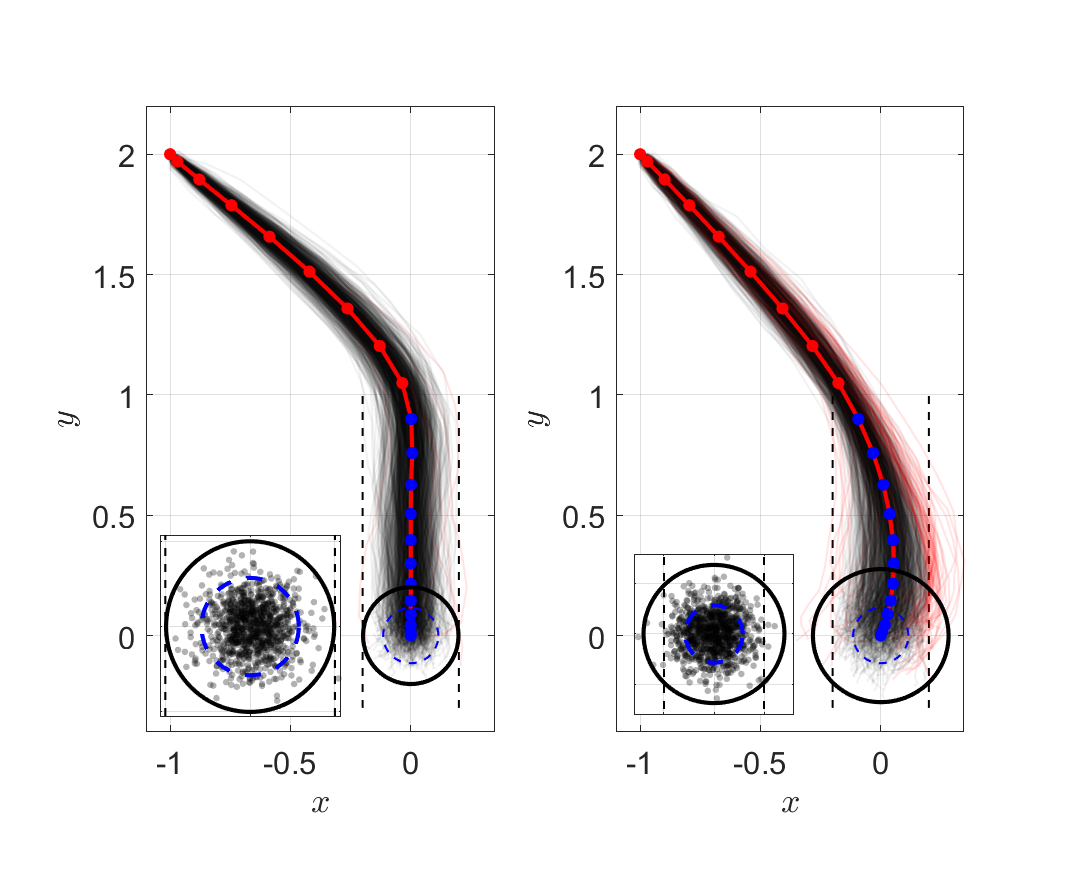}
    \caption{Optimal trajectories for (left) DR-DS solution with $\varepsilon = 15$, and (right) baseline CS solution, subject to \textit{maximal} disturbance $\mathbb{P}_w$ in disturbance ambiguity set $\B_{\varepsilon}^{\|\cdot\|}(\hat{\P}_{w})$.
    }
    \label{fig:double_integrator_maximal}
\end{figure}
When the true noise affecting the system is mis-characterized and not equivalent to the noise the system was designed to handle, the baseline CS is unable to steer to the terminal covariance nor satisfy the chance-constraints with the desired level of risk.
The DR-DS solution, on the other hand, is agnostic to the noise distribution by design (within limits, of course), and is able to steer the state distribution to the terminal ambiguity set and still satisfies the CVaR constraints.
Indeed, the empirical risk of constraint violation is 0.1\% and 5.5\%, respectively, for DR-DS and baseline CS.

Lastly, we would also like to see the effect of \textit{non}-Gaussian disturbances acting on the system.
Recall that since we assumed that the structural ambiguity set is the entire probability space, this implies that the DR-DS should be able to account for any disturbance distribution $\P\in\mathcal{P}_{2}(\R^{d})$ such that $\W(\P, \hat\P_{w}) \leq \varepsilon$.
To this end, we inject noise from a $t$-distribution with 3 DOF, and the resulting optimal trajectories are shown in Figure~\ref{fig:double_integrator_non_gaussian}.
\begin{figure}[!htb]
    \centering
    \includegraphics[width=\linewidth, trim={1cm 0.5cm 1cm 1cm}, clip]{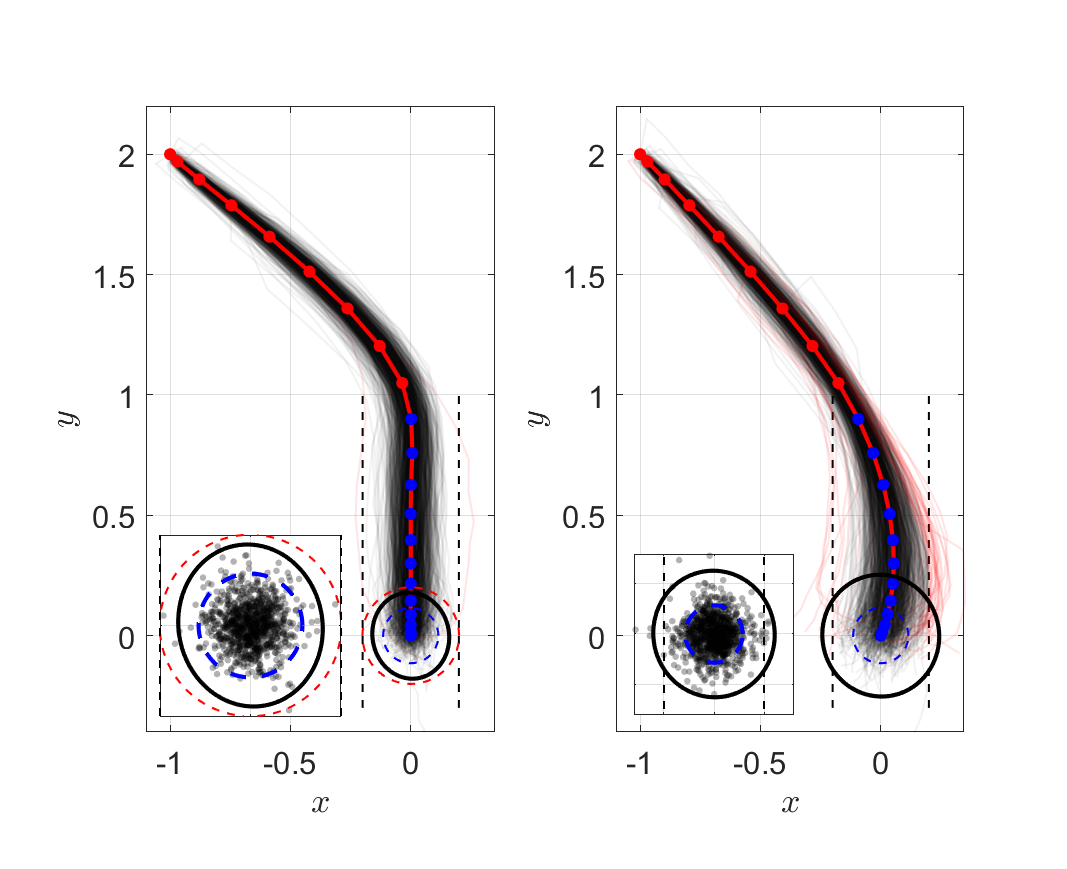}
    \caption{Optimal trajectories for (left) DR-DS solution with $\varepsilon = 15$, and (right) baseline CS solution, subject to \textit{non-Gaussian} t-distribution disturbance.
    }
    \label{fig:double_integrator_non_gaussian}
\end{figure}
Notably, the feedback gains in DR-DS are able to shape the covariance ellipses to satisfy the desired terminal ambiguity set constraints, while the baseline CS fails to take into account this non-Gaussian structure.
The DR-DS solution is also still able to satisfy path constraints with a joint risk of 0.3\%, while the heavy-tail nature of the disturbances skews the transient dispersion of the states for the baseline CS solution, resulting in a joint risk of 3.5\%.
Thus, by incorporating distributional robustness both into the constraints of the system, as well as to the terminal ingredients, we are able to steer a much broader class of systems, whose solutions are robust to uncertainties in our knowledge of the disturbance structure.
\subsection{Quadrotor Landing with Wind Turbulence}
We now turn our attention to a more practical setting of landing a UAV in the presence of harsh wind turbulence.
To this end, we model the quadrotor as a 9-DOF system governed by the nonlinear dynamics
\begin{equation}
    \label{eq:drone_dynamics}
    \begin{aligned}
        \dot{r} &= v, \\
        \dot{q} &= S(q)\omega, \\ 
        \dot{v} &= \frac{1}{m}(-e_3 g + R(q) \hat{e}_3 \tau),
    \end{aligned}
\end{equation}
where $r, v$ represent the position and velocity in an inertial frame, $q \triangleq [\phi, \theta, \psi]^\intercal$ represents the attitude parametrized by ZYX Euler angles, $S(q)$ denotes the rotation matrix for the angular rates from body frame to inertial frame, and $R(q)$ denotes the standard ZYX rotation matrix.
Additionally, the control inputs are the body frame angular rates $\omega$ and the net vertical acceleration $\tau$.
Lastly, $e_3$ and $\hat e_3$ denote the unit vectors along the $z$-axis in the inertial and body frame, respectively.
The remaining parameter values may be found in Table~\ref{tab:drone_parameters}.

We first compute a reference trajectory and control by solving an optimal control problem for the nonlinear system \eqref{eq:drone_dynamics} with initial state $x_0 = [-5, 3, 10, 0_{1\times 6}]^\intercal$ using CasADi \cite{CasADi}.
We then linearize the system around this reference, and subsequently discretize it with $T = 5$ seconds time horizon and $N = 10$ time steps.
Specific details on the exact procedure performed may be found in \cite{JoshJack} for reference.
For the disturbance model, we use the Dryden wind turbulence model \cite{Dryden_model}, which is a zero-mean, stationary Gaussian process defined by its power spectral density (PSD) $\Phi$.
Specifically, we assume six turbulence channels for the three linear and angular velocities respectively.
Since the state vector only contains the attitude, we assume the disturbances enter as $w_{q_k} = \Delta t w_{\dot{q}_k}$, and subsequently $D = [0_{3\times 6}; I_{6}]$.
To compute the covariance matrix $\Sigma_{w}\in\R^{Nd\times Nd}$ of the turbulence for each channel $i\in[d]$, note that by the Wiener-Khintchine theorem \cite{Wiener_theorem}, the covariance function is the inverse Fourier transform of the PSD, that is, 
\begin{equation}
    \Sigma_{i}(\tau) = \int_{\R} \Phi_i(\omega) e^{2\pi \mathrm{i}\omega\tau} \ \d\tau.
\end{equation}
\renewcommand{\arraystretch}{1.3}
\begin{table*}[!htb]
    \begin{minipage}{\columnwidth}
        \centering
        \begin{tabular}{lcccrlc}
            \toprule 
            \midrule
            & \multicolumn{3}{c}{$3\sigma$} & \phantom{a} & \multicolumn{2}{c}{System}\\ 
            \cmidrule{2-4} \cmidrule{6-7}
            & $x$ & $y$ & $z$\\ 
            \midrule 
            $r_f$ (m) & 1.5 & 1.5 & 0.15 && $m$ (kg) & 0.8 \\ 
            $q_f$ (deg) & 5 & 5 & 10 && $g$ (m/s$^2$) & 9.81 \\ 
            $v_f$ (m/s) & 0.5 & 0.5 & 0.05 && $V_0$ (m/s) & 1\\
            \midrule
            \bottomrule 
        \end{tabular} 
        \caption{Parameter values for drone landing problem.}
        \label{tab:drone_parameters}
    \end{minipage}
    \hfill
    \begin{minipage}{\columnwidth}
        \begin{tabular}{lcllllll}
            \toprule
            \multicolumn{2}{c}{$V_0$ (m/s)} & 1 & 5 & 10 & 15 & 20 & 50 \\
            \midrule
            \multirow{3}{*}{$(\sigma/\bar{\sigma})_{r_{N}}$} & $x$ & 2.967 & 2.369 & 2.935 & 3.899 & 5.123 & 14.095 \\ 
            & $y$ & 3.687 & 4.296 & 5.376 & 6.199 & 6.863 & 9.218 \\ 
            & $z$ & 1.072 & 1.067 & 1.087 & 1.105 & 1.121 & 1.171 \\
            \bottomrule
        \end{tabular}
        \caption{Ratio of terminal position standard deviations between DR-DS and baseline CS for different mean wind speeds.}
        \label{tab:terminal_std_ratios}
    \end{minipage}
\end{table*}
We choose an noise ambiguity set radius $\varepsilon = 1$ as well as a desired terminal state radius $\delta = 0.1$.
For the terminal reference distribution, the state elements should have a $3\sigma$ value no greater than those in Table~\ref{tab:drone_parameters}.
For the nominal turbulence distribution, we assume a mean wind speed of $V_0$.
As in the first example, both the baseline CS and DR-DS solutions successfully steer the vehicle under the nominal calm turbulence model, thus we leave this out for brevity.

As in the first example, we first present the nominal optimal trajectories subject to the reference noise distribution.
Figure~\ref{fig:terminal_drone_nominal} shows the terminal splashpoints from the result of 1,000 Monte Carlo trials, as well as the covariances along each $i-j$ plane.
We see that under a calm turbulence, all constraints are met quite conservatively.
\begin{figure}[!htb]
    \begin{subfigure}[b]{0.23\textwidth}
        \centering
        \includegraphics[width=\linewidth, trim={0cm, 0cm, 0cm, 0.6cm}, clip]{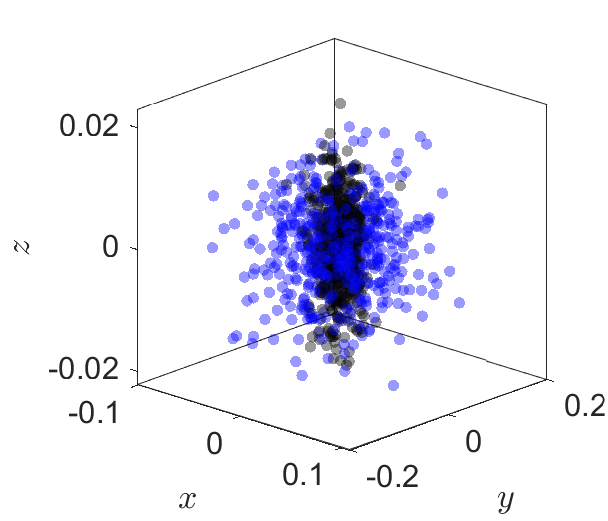}
        \caption{Terminal splash-points of MC trajectories.}
    \end{subfigure}
    \quad 
    \begin{subfigure}[b]{0.23\textwidth}
        \centering
        \includegraphics[width=\linewidth, trim={0cm, 0cm, 0.8cm, 0cm}, clip]{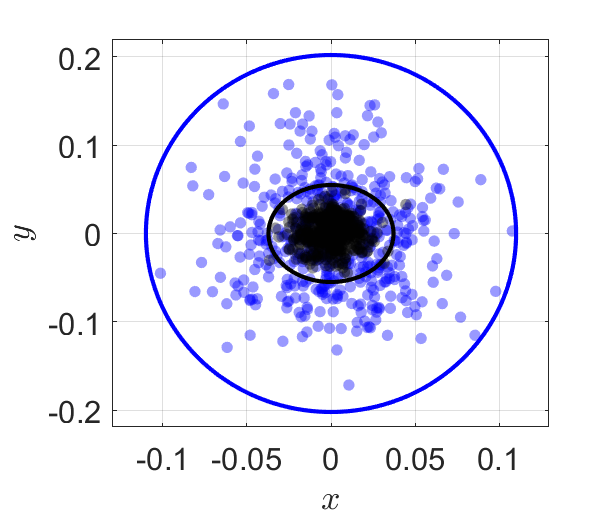}
        \caption{Terminal covariance of $x-y$ position.}
    \end{subfigure}
    \\ 
    \begin{subfigure}[b]{0.23\textwidth}
        \centering
        \includegraphics[width=\linewidth, trim={0cm, 0cm, 0.8cm, 0cm}, clip]{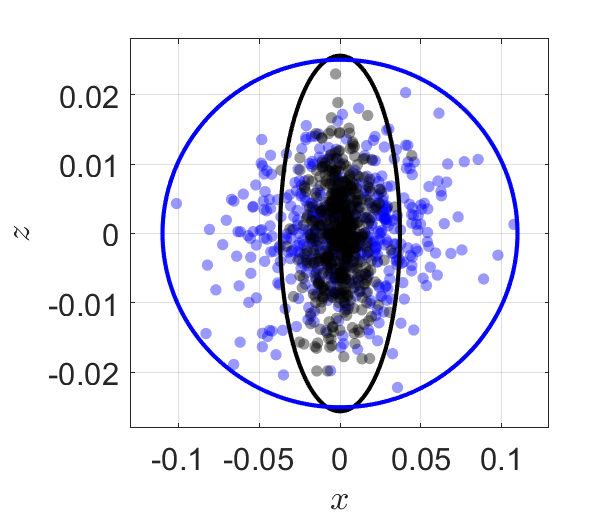}
        \caption{Terminal covariances of $x-z$ position.}
    \end{subfigure}
    \quad 
    \begin{subfigure}[b]{0.23\textwidth}
        \centering
        \includegraphics[width=\linewidth, trim={0cm, 0cm, 0.8cm, 0cm}, clip]{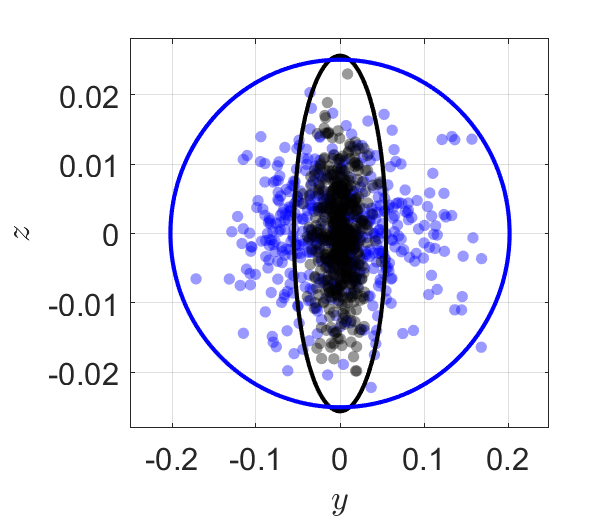}
        \caption{Terminal covariances of $y-z$ position.}
    \end{subfigure}
    \caption{Terminal position distribution and propagated Monte Carlo samples for DR-DS (black) and baseline CS (blue) under nominal turbulence model.}
    \label{fig:terminal_drone_nominal}
\end{figure}
Similar to the first example, we see that even in the nominal case, the DR-DS solution achieves a smaller covariance compared to that of the CS solution.
Intuitively, this suggests that distributional robustness against a set of distributions implies more conservative nominal solutions.

Next, we inject a severe disturbance into the system dynamics, namely with a mean wind speed $V_0 = 20$ m/s.
\begin{figure}[!htb]
    \begin{subfigure}{.5\textwidth}
        \centering
        \includegraphics[width=.78\linewidth]{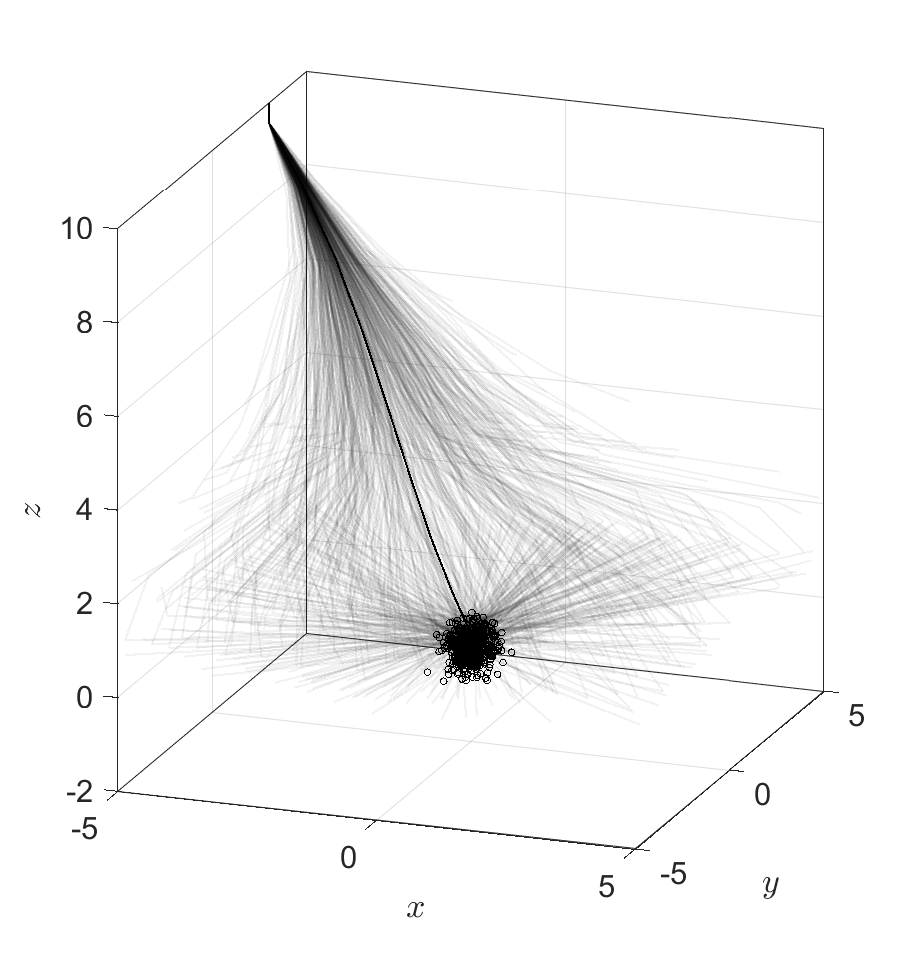}
        \caption{DR-DS position trajectories.}
    \end{subfigure}
    \begin{subfigure}{.5\textwidth}
        \centering
        \includegraphics[width=.78\linewidth]{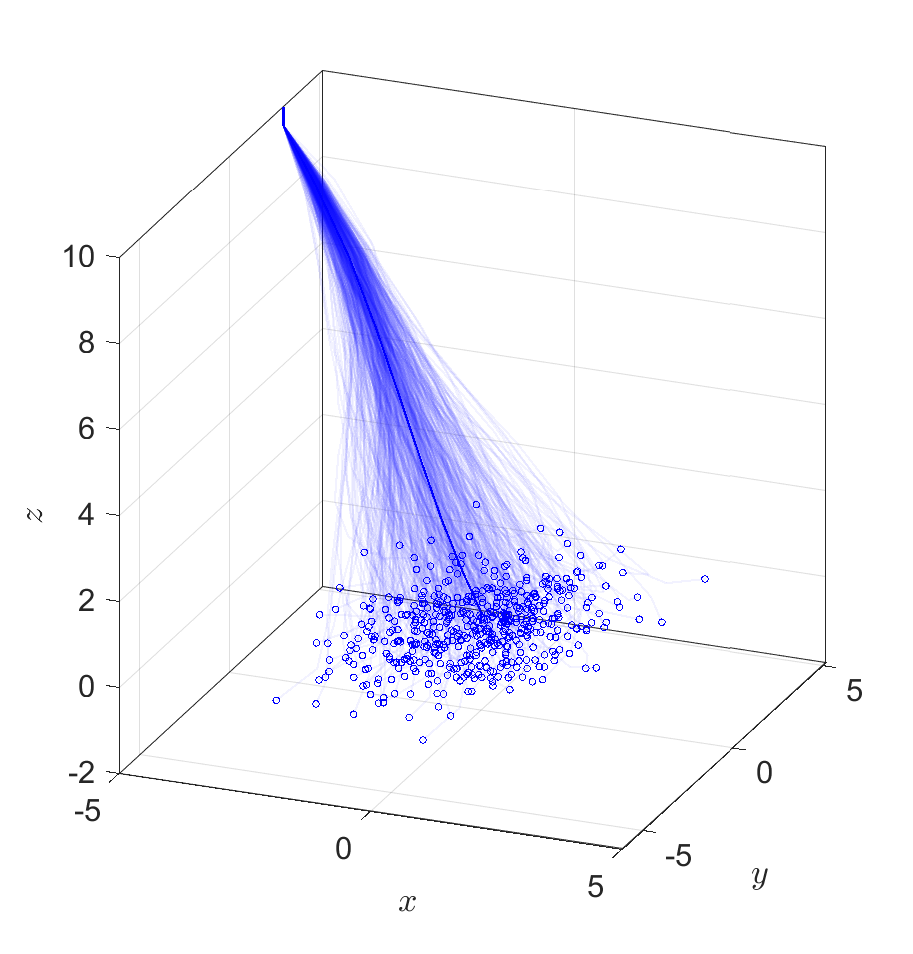}
        \caption{CS position trajectories.}
    \end{subfigure}
    \caption{Monte Carlo trajectories and terminal splashpoints for (a) DR-DS and (b) CS solutions with severe wind turbulence.}
    \label{fig:drone_trajs_severe}
\end{figure}
The trajectories along with the terminal splash-points are shown in Figure~\ref{fig:drone_trajs_severe}.
As expected, the DR-DS solution exhibits much more distributional control of the terminal state, even at large disturbances, though at the expense of wider dispersion in the transient motion.
One could potentially reduce these adverse affects through imposing DR-CVaR constraints on the path of the quadrotor, however we leave this investigation to future work.
Lastly, we would like to quantitatively determine the scale of robustness for increasing levels of wind turbulence.
Table~\ref{tab:terminal_std_ratios} displays the ratios of the terminal position standard deviations of the DR-DS solution $\sigma_{r_N}$ with that of the CS solution $\bar\sigma_{r_N}$.
As mentioned, even in the nominal case, we get a reduction in the covariances, however as the turbulence increases, this reduction grows substantially, giving an almost 15x reduction in standard deviation at extreme turbulence levels.
The variance in the $z$-position, however, does not reduce by all that much, and this is most likely due to the fact that the wind turbulence in the lateral and longitudinal directions is more pronounced than that of the vertical direction, especially at larger wind speeds.

\section{CONCLUSION}
In this work, we have developed a distributionally-robust density control method for steering the distributional uncertainty of the state of a linear dynamical system subject to imperfect knowledge of the disturbances affecting the system.
Through characterizing the distributional uncertainty in the noise distribution via Wasserstein ambiguity sets, we are able to propagate the ambiguity set of the state through the LTI dynamics, and tractably formulate the DR objective function, DR-CVaR constraints, and terminal ambiguity set constraints as an SDP, which can be solved in polynomial time.
We showcased the proposed methodology on both a double integrator steering problem and a drone landing problem, illustrating safe planning under not only mis-characterized i.i.d. Gaussian disturbances, but also imprecise GP turbulence modeling and heavy-tailed distributional robustness.
Future work will aim to investigate tractable formulations of the DR-DS problem in data-driven settings, where the reference noise distribution is constructed from empirical samples.

\section{ACKNOWLEDGMENTS}

This work has been supported by NASA University Leadership Initiative award 80NSSC20M0163 and ONR award N00014-18-1-2828. The article solely reflects the opinions and conclusions of its authors and not any NASA entity.
We would also like to thank Dr. Daniel Kuhn for his discussion and helpful insights.

\bibliographystyle{IEEEtran}
\bibliography{refs.bib}

\appendix
\section*{A.~Proof of Theorem~\ref{thm:CVaR_convex_constraints}}
\setcounter{equation}{0}
\renewcommand{\theequation}{A.\arabic{equation}}
For ease of notation, we drop the subscripts from all variables, i.e., $\alpha_j = \alpha, \gamma_{jk} = \gamma$, and so on.
First, note the following sequence of inclusions of ambiguity sets
\begin{equation*}
    \B_{\varepsilon}^{\|\cdot\|\cdot \tilde{L}_k^\dagger}(\hat\P_k) \subseteq \B_{\varepsilon\sigma_{\max}^{2}(\tilde{L}_k)} \subseteq \mathcal{G}_{\varepsilon\sigma_{\max}^{2}(\tilde{L}_k)},
\end{equation*}
where the first inclusion follows from the fact that the transportation cost can be written as
\begin{equation*}
    \|\tilde{L}_k^\dagger \xi\|^{2} = \sum_{i=1}^{Nd}\frac{1}{\sigma_i^2}|\eta_i^\intercal \xi|^2 \leq \sigma_{\min}^{2}(\tilde{L}_k^\dagger),
\end{equation*}
where $\sigma_i$ are the singular values of $\tilde{L}_k$ and $\eta_i$ are the columns of $U$ resulting from the SVD $\tilde{L}_k = U\Sigma V^\intercal$, and the second inclusion results from Theorem~\ref{thm:Gelbrich_inclusion}.
Thus, it suffices to satisfy the Gelbrich DR-CVaR constraints
\begin{equation*}
    \sup_{\P\in\mathcal{G}_{\bar\varepsilon}(\hat\mu,\hat\Sigma)} \ \cvar_{1-\gamma}^{\P}(\alpha^\intercal x + \beta) \leq 0,
\end{equation*}
where $\bar\varepsilon \triangleq \varepsilon\sigma_{\max}^{2}(\tilde{L}_k)$.
By definition, the Gelbrich ambiguity set contains all distributions in $\mathcal{S}$ whose mean vectors and covariance matrices belong to $\mathcal{U}_{\bar\varepsilon}(\hat\mu,\hat\Sigma)$.
It is fairly straightforward to show that we can equivalently write this ambiguity set as
\begin{equation}
    \label{eq:Gelbrich_decomposition}
    \mathcal{G}_{\bar\varepsilon}(\hat\mu,\hat\Sigma) = \bigcup_{(\mu,\Sigma)\in\mathcal{U}_{\bar\varepsilon}(\hat\mu,\hat\Sigma)} \mathcal{C}(\mu,\Sigma),
\end{equation}
where $\mathcal{C}(\mu,\Sigma)$ is the (structured) Chebyshev ambiguity set that contains all distribution in $\mathcal{S}$ with mean $\mu$ and covariance $\Sigma$.
Further, using \eqref{eq:Gelbrich_decomposition}, the DR-CVaR risk can be decomposed as
\begin{equation}
    \label{eq:CVaR_decomposition}
    \sup_{\P\in\mathcal{G}_{\bar\varepsilon}(\hat\mu,\hat\Sigma)} \cvar_{1-\gamma}^{\P}(\ell) = \sup_{(\mu,\Sigma)\in\mathcal{U}_{\bar\varepsilon}(\hat\mu,\hat\Sigma)}\sup_{\P\in\mathcal{C}(\mu,\Sigma)}\cvar_{1-\gamma}^{\P}(\ell),
\end{equation}
where $\ell\triangleq \alpha^\intercal x + \beta$ for our problem.
The innermost maximization in \eqref{eq:CVaR_decomposition} can be reformulated as follows
\begin{align}
    &\sup_{\P\in\mathcal{C}(\mu,\Sigma)}\cvar_{1-\gamma}^{\P}(\alpha^\intercal x + \beta) \nonumber \\
    &= \beta + \alpha^\intercal \mu + \sup_{\P\in\mathcal{C}(\mu,\Sigma)} \cvar_{1-\gamma}^{\P}\big(\alpha^\intercal (x - \mu)\big) \nonumber \\
    &= \beta + \alpha^\intercal\mu + \sqrt{\alpha^\intercal\Sigma\alpha}\sup_{\P\in\mathcal{C}(\mu,\Sigma)}\cvar_{1-\gamma}^{\P}\left[\frac{\alpha^\intercal(x - \mu)}{\sqrt{\alpha^\intercal\Sigma\alpha}}\right] \nonumber \\
    &= \beta + \alpha^\intercal \mu + \tau(\mu,\Sigma,\alpha) \sqrt{\alpha^\intercal \Sigma \alpha},
\end{align}
where in the first and second equalities, we use the fact that the CVaR risk measure is translation invariant and positive homogeneous \cite{DRO_Kuhn_risk}, and in the last equality we use the definition of the standard risk coefficient.
It can be shown \cite{DRO_Kuhn_risk} that given the structural ambiguity set $\mathcal{S} = \mathcal{P}_{2}(\R^{n})$, and for the CVaR risk measure, the corresponding standard risk coefficient $\tau$ is \textit{independent} of $\mu,\Sigma$, and $\alpha$.
As a result, the optimization problem \eqref{eq:CVaR_decomposition} can be reformulated as
\begin{equation}~\label{eq:DR_CVaR_reformulated}
    \begin{aligned}
        &\sup_{\mu,\Sigma\succeq 0} &&\beta + \alpha^\intercal \mu + \tau \sqrt{\alpha^\intercal \Sigma\alpha} \\
        &\ \ \mathrm{s.t.} &&\|\mu - \hat\mu\|^2 + \mathrm{tr}[\Sigma + \hat\Sigma - 2(\hat\Sigma^{\frac{1}{2}}\Sigma\hat\Sigma^{\frac{1}{2}})^{\frac{1}{2}}] \leq \bar\varepsilon^2.
    \end{aligned}
\end{equation}
Taking the dual of the maximization problem \eqref{eq:DR_CVaR_reformulated} yields
\begin{align}
    &\inf_{\lambda \geq 0}\sup_{\mu,\Sigma\succeq 0} \beta + \alpha^\intercal \mu + \tau \sqrt{\alpha^\intercal\Sigma\alpha} + \lambda\big(\bar\varepsilon^2 - \|\mu-\hat\mu\|^2 \nonumber \\
    &\hspace{4cm}- \mathrm{tr}[\Sigma + \hat\Sigma - 2(\hat\Sigma^{\frac{1}{2}}\Sigma\hat\Sigma^{\frac{1}{2}})^{\frac{1}{2}}]\big) \nonumber \\
    =&\inf_{\lambda\geq 0}\bigg\{\beta + \lambda(\bar\varepsilon^2 - \mathrm{tr}[\hat\Sigma] + \sup_{\mu}\big\{\alpha^\intercal \mu - \lambda \|\mu - \hat\mu\|^2\big\} \nonumber \\
    &\hspace{1cm}+ \sup_{\Sigma\succeq 0}\big\{\tau\sqrt{\alpha^\intercal\Sigma\alpha} + \lambda\mathrm{tr}[-\Sigma + 2(\hat\Sigma^{\frac{1}{2}}\Sigma\hat\Sigma^{\frac{1}{2}})^{\frac{1}{2}}]\big\}\bigg\}. \label{eq:DR_CVaR_dual}
\end{align}
The first supremum over $\mu$ is a simple quadratic maximization problem over a concave function and can be solved analytically, which yields the maximizer $\mu^\star = \frac{\alpha}{2\lambda} + \hat\mu$ with optimal value $\frac{\|\alpha\|^2}{4\lambda} + \alpha^\intercal\hat\mu$.
The second supremum over $\Sigma$ can be reformulated by introducing the auxiliary epigraphical variable $t$ via
\begin{equation}
    \label{eq:second_supremum}
    \sup_{\Sigma\succeq 0, t \geq 0} \tau t + \lambda \mathrm{tr}[-\Sigma + 2(\hat\Sigma^{\frac{1}{2}}\Sigma\hat\Sigma^{\frac{1}{2}})^{\frac{1}{2}}] \quad \mathrm{s.t.} \quad  t^2 \leq \alpha^\intercal\Sigma\alpha.
\end{equation}
Now introduce the variable substitution $B \triangleq (\hat\Sigma^{\frac{1}{2}}\Sigma\hat\Sigma^{\frac{1}{2}})^{\frac{1}{2}}$ and taking the dual of the maximization problem \eqref{eq:second_supremum} yields
\begin{align}
    &\inf_{\rho \geq 0} \sup_{\Sigma\succeq 0, t\geq 0} \tau t - \lambda\mathrm{tr}[\Sigma] + 2\lambda\mathrm{tr}[(\hat\Sigma^{\frac{1}{2}}\Sigma\hat\Sigma^{\frac{1}{2}})^{\frac{1}{2}}] \nonumber \\
    &\hspace{5cm} + \rho(\alpha^\intercal\Sigma\alpha - t^2) \nonumber \\
    =&\inf_{\rho\geq 0}\sup_{\Sigma\succeq 0, t\geq 0} \tau t - \rho t^2 + \mathrm{tr}[\Sigma(\rho \alpha\alpha^\intercal - \lambda I)] \nonumber \\
    &\hspace{5cm} + 2\lambda\mathrm{tr}[(\hat\Sigma^{\frac{1}{2}}\Sigma\hat\Sigma^{\frac{1}{2}})^{\frac{1}{2}}] \nonumber \\
    =&\inf_{\rho\geq 0}\sup_{B\succeq 0, t\geq 0} \tau t - \rho t^2 + \mathrm{tr}[B^2\Delta_{\rho}] + 2\lambda\mathrm{tr}[B] \nonumber \\
    =&\inf_{\rho\geq 0}\bigg\{\sup_{t\geq 0}\big\{\tau t - \rho t^2\big\} + \sup_{B\succeq 0}\big\{\mathrm{tr}[B^2\Delta_{\rho}] + 2\lambda\mathrm{tr}[B]\big\}\bigg\}, \label{eq:second_supremum_dual}
\end{align}
where $\Delta_{\rho} \triangleq \hat\Sigma^{-\frac{1}{2}}(\rho\alpha\alpha^\intercal - \lambda I)\hat\Sigma^{-\frac{1}{2}}$ for any $\rho \geq 0$.
The first supremum has maximizer $t^\star = \tau/2\rho$ and optimal value $\tau^2/4\rho$, and the second supremum is tractable under the assumption that $\Delta_{\rho} \prec 0$, or equivalently, $\lambda \|\alpha\|^{-2} > \rho$.
Taking the first order necessary conditions yields the condition
\begin{equation*}
    B^\star \Delta_{\rho} + \Delta_{\rho}B^\star + 2\lambda I = 0,
\end{equation*}
which yields the maximizer $B^\star = -\lambda\Delta_{\rho}^{-1} \succ 0$ and the optimal value $-\lambda^2\mathrm{tr}[\Delta_{\rho}^{-1}]$.
This maximizer is \textit{unique} because the necessary condition can be interpreted as a Lyapunov equation, whose solution is unique if and only if $\Delta_{\rho}$ is Hurwitz \cite{Hespanha09}.
The dual minimization problem \eqref{eq:second_supremum_dual} thus becomes
\begin{equation}
    \label{eq:second_supremum_dual_reformulated}
    \inf_{0 < \rho < \lambda \|\alpha\|^{-2}} \ \frac{\tau^2}{4\rho} + \lambda^2 \mathrm{tr}[\hat\Sigma^{\frac{1}{2}}(\lambda I - \rho \alpha\alpha^\intercal)^{-1}\hat\Sigma^{\frac{1}{2}}].
\end{equation}
Next, using the Sherman-Morrison formula, we can express \eqref{eq:second_supremum_dual_reformulated} as
\begin{align*}
    &\inf_{0 < \rho < \lambda \|\alpha\|^{-2}} \ \frac{\tau^2}{4\rho} + \lambda\mathrm{tr}[\hat\Sigma] + \frac{\alpha^\intercal\hat\Sigma\alpha}{\rho^{-1} - \|\alpha\|^2/\lambda} \\
    &= \lambda\mathrm{tr}[\hat\Sigma] + \frac{\tau^2}{4}\frac{\|\alpha\|^2}{\lambda} + \tau\sqrt{\alpha^\intercal\hat\Sigma\alpha}.
\end{align*}
In summary, the dual of the DR-CVaR risk \eqref{eq:DR_CVaR_dual} becomes
\begin{align}
    &\inf_{\lambda\geq 0}  \ \beta + \lambda(\bar\varepsilon^2 - \mathrm{tr}[\hat\Sigma]) + \frac{\|\alpha\|^2}{4\lambda} + \alpha^\intercal\hat\mu + \lambda\mathrm{tr}[\hat\Sigma] \nonumber \\
    &\hspace{3.5cm}+ \frac{\tau^2}{4}\frac{\|\alpha\|^2}{\lambda} + \tau \sqrt{\alpha^\intercal\hat\Sigma\alpha} \nonumber \\
    =&\inf_{\lambda\geq 0} \ \beta + \alpha^\intercal\hat\mu + \tau\sqrt{\alpha^\intercal\hat\Sigma\alpha} + \lambda\bar\varepsilon^2 + \frac{\tau^2 + 1}{4}\frac{\|\alpha\|^2}{\lambda} \nonumber \\
    =&\beta + \alpha^\intercal\hat\mu + \tau\sqrt{\alpha^\intercal\hat\Sigma\alpha} + \bar\varepsilon\sqrt{1 + \tau^2} \|\alpha\|, \label{eq:DR_CVaR_final}
\end{align}
which achieves the desired result.
\section*{B.~Proof of Corollary~\ref{cor:CVaR_tractable_constraints}}
\setcounter{equation}{0}
\renewcommand{\theequation}{B.\arabic{equation}}
Plugging in the propagated nominal distribution covariance $\hat\Sigma_k = E_k(I + \mathcal{B}L)\mathcal{D}\Sigma_{w}\mathcal{D}^\intercal(I+\mathcal{B}L)^\intercal E_k^\intercal$
into the DR-CVaR constraints \eqref{eq:DR_CVaR_final} yields the constraints
\begin{align}
    &\beta + \alpha^\intercal\hat\mu + \tau\|\Sigma_w^{1/2}\mathcal{D}^\intercal (I + \mathcal{B}L)^\intercal E_k^\intercal\alpha\| \nonumber \\
    &\hspace{1cm} + \varepsilon \|\alpha\| \sqrt{1 + \tau^2} \sigma_{\max}^{2}(E_k(I + \mathcal{B}L)\mathcal{D}) \leq 0. \label{eq:DR_CVaR_control}
\end{align}
Introducing the epigraphical variable $\rho$ for the last term in \eqref{eq:DR_CVaR_control}, we get
\begin{align}
    &\beta + \alpha^\intercal\hat\mu + \tau\|\Sigma_w^{1/2}\mathcal{D}^\intercal (I + \mathcal{B}L)^\intercal E_k^\intercal\alpha\| \nonumber \\
    &\hspace{4cm} + \varepsilon \rho \|\alpha\| \sqrt{1 + \tau^2} \leq 0, \label{eq:CVaR_SOCC} \\
    &\rho \geq \sigma_{\max}^{2}(E_k(I + \mathcal{B}L)\mathcal{D}). \label{eq:CVaR_SVC}
\end{align}
The first constraint \eqref{eq:CVaR_SOCC} is a second-order cone constraint, and thus amenable to off-the-shelf convex solvers \cite{MOSEK}.
For the second constraint \eqref{eq:CVaR_SVC}, note the equivalence $\sigma_{\max}^{2}(A) = \lambda_{\max}(A^\intercal A) \leq \rho \iff A^\intercal A \leq \rho I$, which is equivalent to the LMI
\begin{equation}
    \begin{bmatrix}
    I & A \\
    A^\intercal & \rho I
    \end{bmatrix} \succeq 0.
\end{equation} 
Applying this reasoning to $\sigma_{\max}^{2}(\tilde{L}_k) \leq \rho$ achieves the desired result.
\section*{C.~Proof of Theorem~\ref{thm:DR_cost_convex}}
\setcounter{equation}{0}
\renewcommand{\theequation}{C.\arabic{equation}}
Similar to Appendix~A, we can decompose the worst-case risk over the Gelbrich ambiguity set as a supremum over a Chebyshev ambiguity set $\mathcal{C}(\mu,\Sigma)$ embedded in a supremum over the uncertainty set $\mathcal{U}_{\varepsilon}(\hat\mu,\hat\Sigma)$.
The maximum value of the expected value of a quadratic loss function over a Chebyshev ambiguity set is simply given by
\begin{equation*}
    \sup_{\P\in\mathcal{C}(\mu,\Sigma)} \ \E_{\P}[\vct w^\intercal \Xi \vct w] = \mu^\intercal\Xi \mu + \mathrm{tr}[\Xi\Sigma].
\end{equation*}
Thus, the Gelbrich DR objective simplifies to the maximization problem
\begin{equation*}
    \sup_{\mu,\Sigma\succeq 0} \mu^\intercal\Xi \mu + \mathrm{tr}[\Xi\Sigma] \quad \mathrm{s.t.} \quad \G^{2}\big((\mu,\Sigma), (\hat\mu,\hat\Sigma)\big) \leq \varepsilon^2,
\end{equation*}
or equivalently, by using the definition of the Gelbrich distance, as
\begin{equation}~\label{eq:Gelbrich_DR_cost}
    \begin{aligned}
        &\sup_{\mu} \sup_{\Sigma\succeq 0} &&\mu^\intercal\Xi \mu + \mathrm{tr}[\Xi\Sigma] \\
        &\quad\mathrm{s.t.} &&\|\mu-\hat\mu\|^2 + \mathrm{tr}[\Sigma + \hat\Sigma - 2(\hat\Sigma^{\frac{1}{2}}\Sigma\hat\Sigma^{\frac{1}{2}})^{\frac{1}{2}}] \leq \varepsilon^2.
    \end{aligned}
\end{equation}
Next, we take the dual of \eqref{eq:Gelbrich_DR_cost} with respect to the maximization over $\Sigma$, which yields
\begin{align}
    &\sup_{\mu} \inf_{\gamma \geq 0}\sup_{\Sigma\succeq 0} \mu^\intercal\Xi \mu + \mathrm{tr}[\Xi\Sigma] + \gamma\big(\varepsilon^2 - \|\mu - \hat\mu\|^2 - \mathrm{tr}[\Sigma \nonumber \\
    &\hspace{4.5cm}+ \hat\Sigma - 2(\hat\Sigma^{\frac{1}{2}}\Sigma\hat\Sigma^{\frac{1}{2}})^{\frac{1}{2}}]\big) \nonumber \\
    =&\sup_{\mu} \bigg\{\mu^\intercal \Xi \mu + \inf_{\gamma \geq 0} \Big\{\gamma \varepsilon^2 - \gamma \|\mu - \hat\mu\|^2 - \gamma \mathrm{tr}[\hat\Sigma] \nonumber \\
    &\hspace{1cm}+ \sup_{\Sigma\succeq 0}\big\{\mathrm{tr}[\Xi\Sigma] - \gamma\big(\mathrm{tr}[\Sigma - 2(\hat\Sigma^{\frac{1}{2}}\Sigma\hat\Sigma^{\frac{1}{2}})^{\frac{1}{2}}]\big)\big\}\Big\}\bigg\}. \label{eq:Gelbrich_DR_cost_dual}
\end{align}
Assuming $\gamma > \lambda_{\max}(\Xi)$ and $\hat\Sigma \succ 0$, the inner maximization over $\Sigma$ in \eqref{eq:Gelbrich_DR_cost_dual} can be solved analytically as follows.
First, and similar to Appendix~A, let $B \triangleq (\hat\Sigma^{\frac{1}{2}}\Sigma\hat\Sigma^{\frac{1}{2}})^{\frac{1}{2}}$, which implies $\Sigma = \hat\Sigma^{-\frac{1}{2}}B^2\hat\Sigma^{-\frac{1}{2}}$.
The inner maximization then becomes
\begin{align*}
    &\sup_{B \succeq 0} \ \mathrm{tr}[\Xi\hat\Sigma^{-\frac{1}{2}}B^2\hat\Sigma^{-\frac{1}{2}}] - \gamma \mathrm{tr}[\hat\Sigma^{-\frac{1}{2}}B^2\hat\Sigma^{-\frac{1}{2}} - 2B] \\
    =&\sup_{B\succeq 0} \ \mathrm{tr}[B^2\hat\Sigma^{-\frac{1}{2}}(\Xi - \gamma I)\hat\Sigma^{-\frac{1}{2}}] + 2\gamma\mathrm{tr}[B] \\
    =&\sup_{\B \succeq 0} \ \mathrm{tr}[B^2\Delta_{\gamma}] + 2\gamma\mathrm{tr}[B],
\end{align*}
where $\Delta_{\gamma}\triangleq \hat\Sigma^{-\frac{1}{2}}(\Xi - \gamma I)\hat\Sigma^{-\frac{1}{2}}$, for any $\gamma \geq 0$.
Since $\gamma > \lambda_{\max}(\Xi)$ and $\hat\Sigma \succ 0$, it follows that $\Delta_{\gamma} \preceq 0$, and thus the objective function is concave in $B$.
To this end, the maximizer becomes $B^\star = -\gamma\Delta_{\gamma}^{-1}$, resulting from the first order optimality conditions
\begin{equation*}
    B^\star \Delta_{\gamma} + \Delta_{\gamma} B^\star + 2\gamma I = 0,
\end{equation*}
and is unique because the associated Lyapunov equation has a unique solution if and only if $\Delta_{\gamma}$ is Hurwitz \cite{Hespanha09}.
Plugging this back into the objective function gives the optimal value $J^\star(\hat\Sigma) = \gamma^2\mathrm{tr}[\hat\Sigma(\Xi-\gamma I)^{-1}]$, with associated maximizer $\Sigma^\star(\hat\Sigma) = \gamma^2(\gamma I - \Xi)^{-1}\hat\Sigma(\gamma I - \Xi)^{-1}$, which holds when $\gamma > \lambda_{\max}(\Xi)$ and $\hat\Sigma \succ 0$.
The cases when $\gamma\not\succ\lambda_{\max}(\Xi)$ and $\hat\Sigma\succeq 0$ may also be treated in a similar manner using $\lim\inf$ arguments, see [\cite{DRO_Kuhn_MSE}, Proposition A.3] for a detailed analysis.
Plugging the optimal value of the maximization over $\Sigma$ back into the dual formulation \eqref{eq:Gelbrich_DR_cost_dual} yields the optimization problem
\begin{align}
    &\sup_{\mu} \inf_{\gamma > \lambda_{\max}(\Xi)} \mu^\intercal \Xi \mu + \gamma \big(\varepsilon^2 - \|\mu-\hat\mu\|^2 - \mathrm{tr}[\hat\Sigma]\big) \nonumber \\
    &\hspace{4.5cm} + \gamma^2 \mathrm{tr}[\hat\Sigma(\Xi - \gamma I)^{-1}] \nonumber \\
    =&\inf_{\gamma I \succ \Xi}\bigg\{\sup_{\|\mu-\hat\mu\|\leq\varepsilon} \Big\{\mu^\intercal\Xi\mu - \gamma\|\mu-\hat\mu\|^2\Big\} + \gamma\big(\varepsilon^2 - \mathrm{tr}[\hat\Sigma] \nonumber \\
    &\hspace{3.5cm} + \gamma \mathrm{tr}[\hat\Sigma(\Xi - \gamma I)^{-1}]\big)\bigg\}, \label{eq:Gelbrich_DR_cost_dual_reformulated}
\end{align}
where the equality holds because the Gelbrich constraint is infeasible unless $\|\mu-\hat\mu\| \leq \varepsilon$ and we can switch the $\sup$ and $\inf$ due to the minimax theorem \cite{Sion1958OnGM}, which applies because $\mu$ ranges over a compact ball and because $\gamma I - \Xi \succ 0$.
For the inner maximization over $\mu$, we complete the square to obtain the objective function $(\mu - z)^\intercal(\Xi - \gamma I)(\mu - z) + y$, where
\begin{align*}
    &z = \gamma(\gamma I - \Xi)^{-1}\hat\mu, \quad y = \hat\mu^\intercal P \hat\mu, \\
    &P = -\gamma I + \gamma^2(\gamma I - \Xi)^{-1}.
\end{align*}
Thus, the maximizer is $\mu^\star = z$ and the optimal value is $y$.
Plugging this back into \eqref{eq:Gelbrich_DR_cost_dual_reformulated} yields the convex program
\begin{align}
    &\inf_{\gamma I \succ \Xi} \gamma(\varepsilon^2 - \mathrm{tr}[\hat\Sigma] - \|\hat\mu\|^{2}) + \gamma^2\big(\hat\mu^\intercal (\gamma I - \Xi)^{-1}\hat\mu \nonumber \\
    &\hspace{4.4cm}+ \mathrm{tr}[\hat\Sigma(\gamma I - \Xi)^{-1}]\big).
\end{align}
Lastly, since we assume that the nominal noise ambiguity set is zero mean, then $\hat\mu = 0$, which achieves the desired result.

\section*{D.~Proof of Corollary~\ref{cor:DR_cost_tractable}}
\setcounter{equation}{0}
\renewcommand{\theequation}{D.\arabic{equation}}
The constraints in \eqref{eq:convex_DR_cost} are convex, but nonlinear in the decision variables $\gamma$ and $L$ due to the last term.
To this end, consider the function
\begin{equation*}
    h(\Xi, \lambda) \triangleq 
    \begin{cases}
        \lambda^2 \mathrm{tr}[\hat\Sigma(\lambda I - \Xi^{-1}], &\mathrm{if} \ \lambda I - \Xi \succ 0, \\
        \infty, &\mathrm{otherwise}.
    \end{cases}
\end{equation*}
If $\lambda I - \Xi \succ 0$, then we have the following equivalence
\begin{align}
    h(\Xi, \lambda) &= \inf_{\Gamma \succeq 0} \ \mathrm{tr}[\Gamma] \quad \mathrm{s.t.} \quad \Gamma \succeq \lambda^2 \hat\Sigma^{\frac{1}{2}} (\lambda I - \Xi)^{-1}\hat\Sigma^{\frac{1}{2}} \nonumber \\
    &= \inf_{\Gamma \succeq 0, \Psi \succ 0} \ \mathrm{tr}[\Gamma] \quad \mathrm{s.t.}
    \begin{aligned}~\label{eq:DR_cost_SDP_reformulation}
         \quad &\Gamma \succeq \lambda^2 \hat\Sigma^{\frac{1}{2}} \Psi^{-1} \hat\Sigma^{\frac{1}{2}}, \\
        &\lambda I - \Xi \succeq \Psi,
    \end{aligned}
\end{align}
where the first equality holds from introducing the auxiliary variable $\Gamma$ and noting that $\Gamma \succeq \bar\Gamma$ implies $\mathrm{tr}[\Gamma] \geq \mathrm{tr}[\bar\Gamma]$ for all $\Gamma,\bar\Gamma \succeq 0$, and the cyclic property of the trace operator.
Similarly, the second equality holds from introducing another auxiliary variable $\Psi$ and noting that $\Psi \succeq \bar\Psi$ is equivalent to $\Psi^{-1} \preceq \bar\Psi^{-1}$ for all $\Psi, \bar\Psi \succ 0$.
The first constraint in \eqref{eq:DR_cost_SDP_reformulation} can be equivalently written as the SDP \eqref{eq:DR_cost_constraint1}.
For the second constraint in \eqref{eq:DR_cost_SDP_reformulation}, we expand $\Xi(L)$, which gives $\Xi(L) = \mathcal{D}^\intercal (\tilde{M}(L) + L^\intercal\mathcal{R}L)\mathcal{D}$, where $\tilde{M}(L)$ as defined in \eqref{eq:tractable_DR_cost} is linear in $L$.
Thus, the resulting constraints become
\begin{equation*}
    \lambda I - \mathcal{D}^\intercal \tilde{M}(L) \mathcal{D} - \mathcal{D}^\intercal L^\intercal \tilde{R} L \mathcal{D} \succeq \Psi,
\end{equation*}
which can be equivalently written as the SDP \eqref{eq:DR_cost_constraint2}, where $\tilde{R}$ is invertible because $\mathcal{R} \succ 0$.
This concludes the proof.

\section*{E.~Dryden Turbulence Model}
\setcounter{equation}{0}
\renewcommand{\theequation}{E.\arabic{equation}}
The Dryden turbulence model is a zero-mean, stationary Gaussian process model for wind gusts, characterized by the power spectral density (PSD) along each linear and angular velocity channel.
In terms of the frequency $\omega$, the PSD along each linear velocity channel is given by
\begin{equation}
    \begin{aligned}
        \Phi_{u_{g}}(\omega) &= \frac{2\sigma_u^2 L_u}{\pi V_0}\frac{1}{1 + (L_u\omega / V_0)^2}, \\
        \Phi_{v_{g}}(\omega) &= \frac{2\sigma_v^2 L_v}{\pi V_0}\frac{1 + 12 (L_v\omega / V_0)^2}{(1 + 4(L_v\omega / V_0)^2)^2}, \\
        \Phi_{w_{g}}(\omega) &= \frac{2\sigma_{w}^2 L_w}{\pi V_0}\frac{1 + 12 (L_w\omega / V_0)^2}{(1 + 4(L_w\omega / V_0)^2)^2},
    \end{aligned}
\end{equation}
where $V_0$ denotes the mean wind speed at 20 feet altitude.
The turbulence intensities $\{\sigma_i, i = 1,2,3\}$ at low altitudes are computed from
\begin{equation}
    \begin{aligned}
        \sigma_{w} &= 0.1 V_0, \\
        \sigma_{u} &= \frac{\sigma_{w}}{(0.177 + 0.000823 z)^{0.4}}, \\
        \sigma_{v} &= \sigma_{u}.
    \end{aligned}
\end{equation}
The characteristic length scales $\{L_u, L_v, L_w, i = 1,2,3\}$ at low altitudes are computed from
\begin{equation}
    \begin{aligned}
        L_{u} &= \frac{z}{(0.177 + 0.000823z)^{1.2}}, \\
        L_{v} &= L_{u}, \\
        L_{w} &= z,
    \end{aligned}
\end{equation}
where $z$ denotes the altitude.
Similarly, the PSD along each angular velocity channel is given by
\begin{equation}
    \begin{aligned}
        \Phi_{p_{g}}(\omega) &= \frac{\sigma_w^2}{2V_0 L_w} \frac{0.8\left(\frac{2\pi L_w}{4b}\right)^{1/3}}{1 + \left(\frac{4b\omega}{\pi V_0}\right)^2}, \\ 
        \Phi_{q_g}(\omega) &= \frac{(\omega/V_0)^2}{1 + \left(\frac{4b\omega}{\pi V_0}\right)^2} \Phi_{w_g}(\omega), \\
        \Phi_{r_g}(\omega) &= \frac{(\omega/V_0)^2}{1 + \left(\frac{3b\omega}{\pi V_0}\right)^2}\Phi_{v_g}(\omega),
    \end{aligned}
\end{equation}
where $b$ denotes the span of the quadcopter.
For simplicity, we choose a reference altitude $z = 10$ m, and a wingspan $b = 0.34$ m, typical of small quadcopter.

\end{document}